\newtheorem{theorem}{Theorem}
\newtheorem{lemma}{Lemma}
\newtheorem{definition}{Definition}
\newtheorem{corollary}{Corollary}
\DeclareMathOperator{\sign}{sgn}
\begin{document}

\title{Approximation Rates for Neural Networks with General Activation Functions
}

\author{Jonathan W. Siegel \\
  Department of Mathematics\\
  Pennsylvania State University\\
  University Park, PA 16802 \\
  \texttt{jus1949@psu.edu} \\
  \And Jinchao Xu \\
  Department of Mathematics\\
  Pennsylvania State University\\
  University Park, PA 16802 \\
  \texttt{jxx1@psu.edu} \\
}

\maketitle

\begin{abstract}
We prove some new results concerning the approximation rate of neural networks with general activation functions.
Our first result concerns the rate
of approximation of a two layer neural network with a polynomially-decaying non-sigmoidal activation function. We extend the dimension independent approximation rates previously obtained to this new class of activation functions. Our second result gives a weaker, but still dimension independent, approximation rate for a larger class of activation functions, removing the polynomial decay assumption. This result applies to any bounded, integrable activation function. Finally, we show that a stratified sampling approach can be used to improve the approximation rate for polynomially decaying activation functions under mild additional assumptions.
\end{abstract}

\section{Introduction}
Deep neural networks have recently revolutionized a variety of areas of machine learning,
including computer vision and speech recognition \cite{lecun2015deep}. A deep neural network
with $l$ layers is a statistical model which takes the following form
\begin{equation}
 f(x;\theta) = h_{W_l,b_l} \circ \sigma \circ h_{W_{l-1},b_{l-1}}\circ \sigma \circ \cdots \circ \sigma \circ h_{W_1,b_1},
\end{equation}
where $h_{W_i,b_i}(x) = W_ix + b_i$ is an affine linear function, $\sigma$ is a fixed activation function
which is applied pointwise and $\theta = \{W_1,...,W_l,b_1,...,b_l\}$ are the parameters of the model.

The approximation properties of neural networks have recieved a lot of attention, with many positive results. For example,
in \cite{leshno1993multilayer,ellacott1994aspects} it is shown that neural networks can appproximate any function on a compact set as 
long as the activation function is not a polynomial, i.e. that the set 
\begin{equation}
   \label{ShallowNN}
	\Sigma_d(\sigma)=\mathrm{span}\left\{\sigma(\omega\cdot x+b):\omega\in\mathbb{R}^d,b\in\mathbb{R}\right\}
\end{equation}
is dense in $C(\Omega)$ for any compact $\Omega\subset \mathbb{R}^d$. An earlier result of this form can be found in
\cite{hornik1993some}, and \cite{hornik1991approximation} shows that derivatives can be approximated arbitrarily accurately as well.
An elementary and constructive proof for $C^\infty$ functions can be found in \cite{attali1997approximations}.

In addition, 
quantitative estimates on the order of approximation are obtained for sigmoidal activation functions in \cite{barron1993universal}
and for periodic activation functions in \cite{mhaskar1994dimension} and \cite{mhaskar1995degree}. Results for general
activation functions can be found in \cite{hornik1994degree}. A remarkable feature of these results in that the approximation rate is $O(n^{-1/2})$, where $n$ is the number of hidden neurons, which shows that neural networks can overcome the curse of dimensionality. Results 
concerning the approximation properties of generalized translation networks (a generalization of two-layer neural networks) 
for smooth and analytic functions are obtained in \cite{mhaskar1996neural}.
Approximation estimates for multilayer convolutional neural networks are considered in \cite{zhou2018universality}
and multilayer networks with rectified linear activation functions in \cite{yarotsky2017error}.
A comparison of the effect of depth vs width on the expressive power of neural networks is presented in \cite{lu2017expressive}. 

An optimal approximation rate in terms of highly smooth Sobolev norms is given in \cite{petrushev1998approximation}. This work differs from previous work and the current work in that it considers approximation of highly smooth functions, for which proof techniques based on the Hilbert space structure of Sobolev spaces can be used. In contrast, the line of reasoning initially persued in \cite{barron1993universal} and continued in this work makes significantly weaker assumptions on the function to be approximated.

A review of a variety of known results, especially for networks with one hidden layer, 
can be found in \cite{pinkus1999approximation}. More recently, these results have been improved by a factor of $n^{1/d}$ in \cite{klusowski2016uniform} using the idea of stratified sampling, based in part on the techniques in \cite{makovoz1996random}.  

Our work, like much of the previous work, focuses on the case of two-layer neural networks. A two layer neural network
can be written in the following particularly simple way
\begin{equation}
 f(x;\theta) = \displaystyle\sum_{i = 1}^n \beta_i\sigma(\omega_i \cdot x + b_i),
\end{equation}
where $\theta = \{\omega_1,...,\omega_n,b_1,...,b_n\}$ are parameters and $n$ is the number of hidden neurons in the
model. 

In this work, we study the
how the approximation properties of two-layer neural networks depends on the number of hidden neurons. In particular, we consider
the class of functions where the number of hidden neurons is bounded,
\begin{equation}\label{ShallowNN_bounded}
\Sigma^n_d(\sigma)=\left\{\displaystyle\sum_{i = 1}^n \beta_i\sigma(\omega_i \cdot x + b_i):
\omega_i\in\mathbb{R}^d,b_i,\beta_i\in\mathbb{R}\right\},
\end{equation}
and prove the Theorem \ref{approximation_rate_theorem} concerning the 
order of approximation as $n\rightarrow \infty$ for activation functions with
polynomial decay and Theorem \ref{periodic-activation} which applies to neural networks with periodic activation functions. Our results make the assumption that the function to be approximated, $f:\mathbb{R}^d\rightarrow \mathbb{R}$, has bounded Barron norm
\begin{equation}\label{barron-norm}
 \|f\|_{\mathcal{B}^s} = \int_{\mathbb{R}^d} (1 + |\omega|)^s|\hat f(\omega)|d\omega,
\end{equation}
and we consider the problem of approximating $f$ on a bounded domain $\Omega$. This is a significantly weaker assumption than the strong smoothness assumption made in \cite{petrushev1998approximation,kainen2007sobolev}.
\iffalse
\begin{theorem}\label{order_of_approximation_theorem}
 Let $\Omega\subset \mathbb{R}^d$ be a bounded domain with Lipschitz boundary. Assume that $f\in H^m(\Omega)$ satisfies
 \begin{equation}
  \gamma(f) = \int_{\mathbb{R}^d} (1 + |\omega|)^{m+1}|\hat{f}(\omega)|d\omega < \infty
 \end{equation}
 for some extension $f\in H^m(\mathbb{R}^d)$ with $f|_\Omega = f$.
 Then, if the activation function $\sigma\in W^{m,\infty}(\mathbb{R})$ is non-zero and satisfies the polynomial decay condition
 \begin{equation}\label{growth_condition}
  |(D^k\sigma(t))| \leq C_m(1 + |t|)^{-p}
 \end{equation}
 for $0\leq k\leq m$ and some $p > 1$, we have
 \begin{equation}
  \inf_{f_n\in \Sigma_d^n(\sigma)}\|f - f_n\|_{H^m(\Omega)} \leq C(p,m,\Omega,\sigma)\gamma(f)n^{-\frac{1}{2}}
 \end{equation}

\end{theorem}
\fi
Similar results appear in \cite{barron1993universal,hornik1994degree}, but we have improved their bound by a logarithmic factor
for exponentially decaying activation functions and generalized these result to polynomially decaying
activation functions. We also leverage this result to obtain a somewhat worse, though still dimension independent, approximation rate without the polynomial decay condition. This result ultimately applies to every activation function of bounded variation. Finally, we extend the stratified sampling argument in \cite{klusowski2016uniform} to polynomially decaying activation functions in Theorem \ref{stratified-sampling} and to periodic activation functions in Theorem \ref{periodic-improved}. This gives an improvement on the asymptotic rate of convergence under mild additional assumptions.
\iffalse
Finally, we examine in more detail the Barron norm appearing in our bounds and in the bounds in \cite{barron1993universal,hornik1994degree,klusowski2016uniform}. In particular, we show that this norm is bounded by a class of anisotropic Sobolev norms, which allows us to compare the approximation properties of neural networks with the well-known sparse grid methods \cite{bungartz2004sparse}.

Finally, we provide a simplified proof using Fourier analysis of the density result in \cite{leshno1993multilayer} under the mild
additional assumption of polynomial growth on $\sigma$.

\begin{theorem}\label{density_first}
Assume that $\sigma$ is a Riemann integrable function which satisfies a polynomial growth condition, i.e.
$|\sigma(t)| \leq C(1 + |t|)^p$ for some constants $C$ and $p$. Then if $\sigma$ is not a polynomial, $\Sigma_d(\sigma)$ in dense in
$C(\Omega)$ for any compact $\Omega\subset \mathbb{R}^n$.
\end{theorem}
\fi

The paper is organized as follows. In the next section, we discuss some basic results concerning the Fourier transform.
We use these results to provide a simplified proof using Fourier analysis of the density result in \cite{leshno1993multilayer} under the mild
additional assumption of polynomial growth on $\sigma$. 
Then, in the third section, we study
the order of approximation and
prove Theorems \ref{approximation_rate_theorem} and \ref{periodic-activation}, extending the result in \cite{barron1993universal,hornik1994degree} to polynomially decaying activation functions, respectively periodic activation functions, and removing a logarithmic factor in the rate of approximation. In the fourth section, we provide a new argument using an approximate integral representation to obtain dimension independent results without the polynomial decay condition in Theorem \ref{theorem_no_decay}. In the fifth section, we use a stratified sampling argument to prove Theorems \ref{stratified-sampling} and \ref{periodic-improved}, which improve upon the convergence rates in Theorems \ref{approximation_rate_theorem} and \ref{periodic-activation} under mild additional assumptions. This generalizes the results in \cite{klusowski2016uniform} to more general activation functions. Finally, we give concluding remarks and further research directions in the
conclusion.

\section{Preliminaries}
Our arguments will make use of the theory of tempered distributions (see
\cite{strichartz2003guide,stein2016introduction} for an
introduction) and we begin by collecting some results of independent interest, which will also be important later. We 
begin by noting that an activation function $\sigma$ which satisfies a polynomial growth condition
$|\sigma(x)| \leq C(1 + |x|)^n$ for some constants $C$ and $n$ is a tempered distribution. As a result,
we make this assumption on our activation functions in the following theorems. We briefly note that
this condition is sufficient, but not necessary (for instance an integrable function need not satisfy a 
pointwise polynomial growth bound) for $\sigma$ to represent a tempered distribution.

 We begin by studying the convolution of $\sigma$ with a Gaussian mollifier. Let $\eta$ be a Gaussian mollifier
 \begin{equation}
  \eta(x) = \frac{1}{\sqrt{\pi}}e^{-x^2}.
 \end{equation}
Set $\eta_\epsilon=\frac{1}{\epsilon}\eta(\frac{x}{\epsilon})$. Then consider $\sigma_{\epsilon}$
\begin{equation}
\label{sigma-epsilon}
\sigma_{\epsilon}(x):=\sigma\ast{\eta_\epsilon}(x)=\int_{\mathbb{R}}\sigma(x-y){\eta_\epsilon}(y)dy
\end{equation}
for a given activation function $\sigma$.

It is clear that $\sigma_{\epsilon}\in C^\infty(\mathbb{R})$. Moreover, by considering the Fourier transform (as a tempered
distribution) we see that
\begin{equation}\label{eq_278}
 \hat{\sigma}_{\epsilon} = \hat{\sigma}\hat{\eta}_{\epsilon} = \sqrt{\pi}\epsilon\hat{\sigma}\eta_{\epsilon^{-1}}.
\end{equation} 
We begin by stating a lemma which characterizes the set of polynomials in terms of their
 Fourier transform.
\begin{lemma}\label{polynomial_lemma} Given a tempered distribution
  $\sigma$,  the following statements are equivalent:
\begin{enumerate}
\item $\sigma$ is a polynomial 
\item $\sigma_\epsilon$ given by \eqref{sigma-epsilon} is a polynomial for any
  $\epsilon>0$. 
\item $\text{\normalfont supp}(\hat{\sigma})\subset \{0\}$. 
\end{enumerate}
\end{lemma}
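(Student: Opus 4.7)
The plan is to prove the equivalence by showing the cycle (1)$\Rightarrow$(2)$\Rightarrow$(3)$\Rightarrow$(1). The first implication is essentially a direct computation with convolution against a Gaussian, while the other two rely on identifying the support of $\hat\sigma$ with that of $\hat\sigma_\epsilon$ via equation \eqref{eq_278}, and then invoking the classical structure theorem for distributions supported at a single point.

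For (1)$\Rightarrow$(2), write $\sigma(x)=\sum_{k=0}^N a_k x^k$ and expand
\[
\sigma_\epsilon(x)=\int_{\mathbb{R}}\sum_{k=0}^N a_k (x-y)^k\eta_\epsilon(y)\,dy
=\sum_{k=0}^N a_k\sum_{j=0}^k\binom{k}{j}x^{k-j}(-1)^j m_j(\epsilon),
\]
where $m_j(\epsilon)=\int y^j\eta_\epsilon(y)dy$ is a finite moment of the Gaussian. This gives a polynomial in $x$ of degree at most $N$.

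For (2)$\Rightarrow$(3), suppose $\sigma_\epsilon$ is a polynomial for some fixed $\epsilon>0$. Then $\hat\sigma_\epsilon$ is a finite linear combination of derivatives of the Dirac mass at the origin, so $\mathrm{supp}(\hat\sigma_\epsilon)\subset\{0\}$. The identity \eqref{eq_278} writes $\hat\sigma_\epsilon=\hat\sigma\,\hat\eta_\epsilon$ as a product of a tempered distribution with a smooth, strictly positive Gaussian. The main point to verify here — which I regard as the only real subtlety of the argument — is that multiplication by a nowhere-vanishing smooth function preserves the support of a distribution: if $\hat\eta_\epsilon(\xi_0)\neq 0$ and $\hat\sigma_\epsilon$ vanishes on a neighborhood $U$ of $\xi_0$, then on $U$ we can locally write $\hat\sigma=\hat\sigma_\epsilon/\hat\eta_\epsilon$ (division by a nonvanishing $C^\infty$ function is a well-defined distributional operation), and $\hat\sigma$ vanishes on $U$ as well. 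Consequently $\mathrm{supp}(\hat\sigma)\subset \mathrm{supp}(\hat\sigma_\epsilon)\subset\{0\}$.

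For (3)$\Rightarrow$(1), I will use the standard classification: a tempered distribution supported at the origin is of the form $\hat\sigma=\sum_{|\alpha|\leq N}c_\alpha \partial^\alpha \delta_0$ for some finite $N$ and constants $c_\alpha$. Taking the inverse Fourier transform converts each $\partial^\alpha\delta_0$ into a constant multiple of the monomial $x^\alpha$, so $\sigma$ is a polynomial. This closes the cycle and completes the proof. The main obstacle, as noted, is the careful handling of the distributional division by $\hat\eta_\epsilon$ in the step (2)$\Rightarrow$(3); the remaining implications are essentially bookkeeping.
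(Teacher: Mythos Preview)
Your proof is correct and follows essentially the same approach as the paper: both arguments hinge on the structure theorem for distributions supported at the origin (for the equivalence with being a polynomial) and on equation \eqref{eq_278} together with the nowhere-vanishing of the Gaussian (for identifying $\mathrm{supp}(\hat\sigma)$ with $\mathrm{supp}(\hat\sigma_\epsilon)$). The only cosmetic difference is that the paper establishes the two equivalences (1)$\Leftrightarrow$(3) and (2)$\Leftrightarrow$(3) directly, whereas you run the cycle (1)$\Rightarrow$(2)$\Rightarrow$(3)$\Rightarrow$(1) and include an explicit moment computation for (1)$\Rightarrow$(2); your handling of the distributional division in (2)$\Rightarrow$(3) is in fact a bit more careful than the paper's one-line appeal to nowhere-vanishing.
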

\begin{proof}
  We begin by proving that (3) and (1) are equivalent.
  This follows from a characterization of distributions supported at a
  single point (see \cite{strichartz2003guide}, section 6.3). In particular, a distribution supported
  at $0$ must be a finite linear combination of Dirac masses and their derivatives.
  In particular, if $\hat{\sigma}$ is supported at $0$, then
  \begin{equation}
   \hat{\sigma} = \displaystyle\sum_{i=1}^n a_i\delta^{(i)}.
  \end{equation}
  Taking the inverse Fourier transform and noting that the inverse Fourier transform of $\delta^{(i)}$ is $c_ix^i$,
  we see that $\sigma$ is a polynomial. This shows that (3) implies (1), for the converse we simply take the
  Fourier transform of a polynomial and note that it is a finite linear combination of Dirac masses and their derivatives.
  
  Finally, we prove the equivalence of (2) and (3). For this it suffices to show that $\hat{\sigma}$ is supported at $0$ iff
  $\hat{\sigma}_\epsilon$ is supported at $0$. This follows from equation \ref{eq_278} and the fact that
  $\eta_{\epsilon^{-1}}$ is nowhere vanishing.

\iffalse
Because $\sigma$ is not a polynomial, by Lemma \ref{polynomial_lemma} 
there exists an $a\neq 0$ such that $a\in \text{supp}(\hat{\sigma})$. 
Combining this with the fact that $\eta_{\epsilon^{-1}}$ is a non-vanishing $C^\infty$ function, 
we obtain $a\in \text{supp}(\hat{\sigma}_{\epsilon})$
and thus (again by Lemma \ref{polynomial_lemma}) $\sigma_{\epsilon}$ is not a polynomial.
\fi
\end{proof}

As an application of Lemma \ref{polynomial_lemma}, let us give a
simple proof of the following result.  The first proof of this result
can be found in \cite{leshno1993multilayer} and is summarized in
\cite{pinkus1999approximation}.  Extending this result to the case of
non-smooth activation is first done in several steps in
\cite{leshno1993multilayer}. Our contribution is to provide a much
simpler argument based on Fourier analysis.

\begin{theorem}\label{density_result}
Assume that $\sigma$ is a Riemann integrable function which satisfies a polynomial growth condition, i.e.
\begin{equation}
 |\sigma(t)|\leq C(1 + |t|)^p
\end{equation}
holds for some constants $C$ and $p$. Then if $\sigma$ is not a polynomial, $\Sigma_d(\sigma)$ in dense in
$C(\Omega)$ for any compact $\Omega\subset \mathbb{R}^n$.
\end{theorem}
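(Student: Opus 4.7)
The plan is to reduce the problem to density of polynomials via the Stone--Weierstrass theorem, using the mollified activation $\sigma_\epsilon$ as a bridge. The three ingredients are: (i) $\sigma_\epsilon(\omega\cdot x+b)$ lies in the $C(\Omega)$-closure of $\Sigma_d(\sigma)$; (ii) by Lemma~\ref{polynomial_lemma}, $\sigma_\epsilon$ is $C^\infty$ and not a polynomial, so all polynomials in $x$ can be generated by differentiating the family $\sigma_\epsilon(\lambda\omega\cdot x+b)$ in $\lambda$; (iii) polynomials are dense in $C(\Omega)$ on any compact $\Omega$.

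For (i), fix any compact $\Omega\subset\mathbb{R}^d$, $\omega\in\mathbb{R}^d$, $b\in\mathbb{R}$, and $\epsilon>0$. Write
\begin{equation}
\sigma_\epsilon(\omega\cdot x+b)=\int_{\mathbb{R}}\sigma\bigl(\omega\cdot x+b-y\bigr)\eta_\epsilon(y)\,dy.
\end{equation}
For each mesh on $\mathbb{R}$ the corresponding Riemann sum is a finite linear combination $\sum_i \eta_\epsilon(y_i)\sigma(\omega\cdot x+b-y_i)\Delta y\in\Sigma_d(\sigma)$. The Gaussian decay of $\eta_\epsilon$ combined with the polynomial growth and Riemann integrability of $\sigma$ gives uniform convergence of these sums for $x\in\Omega$ (one first truncates $y$ to a large interval, controlling the tail via $\eta_\epsilon$ and the polynomial growth bound, and then applies Riemann integrability of the continuous-in-$y$ integrand on the truncated interval uniformly in $x$). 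Hence $\sigma_\epsilon(\omega\cdot x+b)\in\overline{\Sigma_d(\sigma)}$.

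For (ii), since $\sigma$ is not a polynomial, Lemma~\ref{polynomial_lemma} tells us $\sigma_\epsilon$ is not a polynomial either. Moreover $\sigma_\epsilon$ is real analytic, being the convolution of a tempered distribution with the real-analytic Gaussian kernel. I claim there exists $b_0\in\mathbb{R}$ with $\sigma_\epsilon^{(k)}(b_0)\neq 0$ for every $k\geq 0$. If not, then $\mathbb{R}=\bigcup_{k\geq 0}\{b:\sigma_\epsilon^{(k)}(b)=0\}$, a countable union of closed sets, so by the Baire category theorem some $\{b:\sigma_\epsilon^{(k)}(b)=0\}$ has nonempty interior; by analyticity $\sigma_\epsilon^{(k)}\equiv 0$, making $\sigma_\epsilon$ a polynomial and contradicting Lemma~\ref{polynomial_lemma}. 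With such $b_0$ in hand, consider the curve $\lambda\mapsto\sigma_\epsilon(\lambda\omega\cdot x+b_0)$. Forming the $k$-th symmetric difference quotient at $\lambda=0$ and passing to the limit (uniformly for $x\in\Omega$, by real analyticity and uniform bounds on derivatives on compact sets) yields
\begin{equation}
(\omega\cdot x)^k\,\sigma_\epsilon^{(k)}(b_0)\in\overline{\Sigma_d(\sigma)},
\end{equation}
and dividing by the nonzero scalar $\sigma_\epsilon^{(k)}(b_0)$ shows $(\omega\cdot x)^k\in\overline{\Sigma_d(\sigma)}$ for every $k\geq 0$ and every $\omega\in\mathbb{R}^d$.

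Finally, polarizing the identities $(\omega\cdot x)^k$ over varying $\omega$ produces every monomial $x_1^{\alpha_1}\cdots x_d^{\alpha_d}$, so every polynomial in $x$ lies in $\overline{\Sigma_d(\sigma)}$. The Stone--Weierstrass theorem then gives $\overline{\Sigma_d(\sigma)}\supseteq C(\Omega)$, completing the proof. The main technical obstacle is the interchange of limits: both the Riemann-sum approximation of the convolution and the difference-quotient limit must converge in $C(\Omega)$, which is where the polynomial growth of $\sigma$, Gaussian decay of $\eta_\epsilon$, and analyticity of $\sigma_\epsilon$ are all used.
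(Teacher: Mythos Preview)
Your proof is correct and follows the same overall strategy as the paper: mollify $\sigma$ to $\sigma_\epsilon$, show $\sigma_\epsilon(\omega\cdot x+b)\in\overline{\Sigma_d(\sigma)}$ via Riemann sums, use that $\sigma_\epsilon$ is smooth and (by Lemma~\ref{polynomial_lemma}) not a polynomial to produce all polynomials in the closure, and conclude by Weierstrass.

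The execution differs in two minor respects, and in both the paper's route is shorter. First, the paper differentiates $\sigma_\epsilon(\omega\cdot x+b)$ with respect to the components of $\omega$ to obtain $x^\alpha\sigma_\epsilon^{(|\alpha|)}(\omega\cdot x+b)$ directly, then sets $\omega=0$; this yields every monomial $x^\alpha$ without any polarization step. Second, because the paper allows a different base point $\theta_k$ for each order $k$, it only needs the trivial observation that $\sigma_\epsilon^{(k)}\not\equiv 0$ (immediate from ``not a polynomial''), whereas you insist on a single $b_0$ good for all $k$ simultaneously, which forces the Baire category plus real-analyticity argument. That argument is correct, but it is extra machinery you can drop entirely.
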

\begin{proof}
Let us first prove the theorem in a special case that $\sigma\in
C^\infty(\mathbb{R})$.
Since $\sigma\in C^\infty(\mathbb{R})$, it follows that for every $\omega,b$
\begin{equation}
 \frac{\partial}{\partial \omega_j}\sigma(\omega\cdot x+b) = 
 \lim_{n\rightarrow \infty}\frac{\sigma((\omega+h e_j)\cdot x+b)-\sigma(\omega\cdot x+b)}{h} \in \overline{\Sigma}_d(\sigma)
\end{equation}
for all $j=1,...,d$. 

By the same argument, for $\alpha = (\alpha_1,...,\alpha_d)$
$$D^\alpha_\omega\sigma(\omega\cdot x+b)\in\overline{\Sigma}_d(\sigma)$$
for all $k\in\mathbb{N}$, $j=1,...,d$, $\omega\in\mathbb{R}^d$ and $b\in\mathbb{R}$.

Now 
$$D^\alpha_\omega\sigma(\omega\cdot x+b)=x^\alpha\sigma^{(k)}(\omega\cdot x+b),$$
where $x^\alpha = x_1^{\alpha_1}\cdots x_d^{\alpha_d}$.  Since
$\sigma$ is not a polynomial there exists a $\theta_k\in\mathbb{R}$
such that $\sigma^{(k)}(\theta_k)\ne0$.  Taking $\omega=0$ and
$b=\theta_k$, we thus see that $x_j^k\in\overline{\Sigma}_d(\sigma)$.
Thus, all polynomials of the form $x_1^{k_1}\cdots x_d^{k_d}$ are in
$\overline{\Sigma}_d(\sigma)$.

This implies that $\overline{\Sigma}_d(\sigma)$ contains all
polynomials.  By Weierstrass's Theorem \cite{stone1948generalized} it
follows that $\overline{\Sigma}_d(\sigma)$ contains $C(K)$ for each
compact $K\subset\mathbb{R}^n$. That is $\Sigma_d(\sigma)$ is dense in
$C(\mathbb{R}^d)$.

By the preceding lemma, it follows that $\Sigma_d(\sigma_\epsilon)$ is dense and so it suffices to 
show that $\sigma_{\epsilon}\in \Sigma_d(\sigma)$. This follows by using the 
Riemann integrability of $\sigma$ and approximating the integral
\begin{equation}
 \int_{\mathbb{R}}\sigma(x-y){\eta_\epsilon}(y)dy
\end{equation}
by a sequence of Riemann sums, each of which is clearly in $\Sigma_d(\sigma)$.
\end{proof}

\section{Convergence Rates in Sobolev Norms}
In this section, we study the order of approximation for two-layer neural networks as the number of neurons increases. 
In particular, we consider the
space of functions represented by a two-layer neural network with $n$ neurons and activation function $\sigma$ given in \eqref{ShallowNN_bounded},
and ask the following question: Given a function $f$ on a bounded domain, how many neurons do we need to approximate $f$
with a given accuracy?

Specifically, we will consider the problem of approximating a function $f$ with bounded Barron norm \eqref{barron-norm} in the Sobolev space $H^m(\Omega)$. Our first step will be to prove a lemma showing that the Sobolev norm is bounded by the Barron norm.

\begin{lemma}\label{smoothness-lemma}
 Let $m \geq 0$ be an integer and $\Omega\subset \mathbb{R}^d$ a bounded domain. Then for any Schwartz function $f$, we have
 \begin{equation}
  \|f\|_{H^m(\Omega)} \leq C(m)|\Omega|^{1/2}\|f\|_{\mathcal{B}^m}.
 \end{equation}

\end{lemma}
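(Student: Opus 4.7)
The plan is to bound each derivative pointwise by using Fourier inversion, then integrate over $\Omega$ to pass to an $L^2$ bound, picking up the factor $|\Omega|^{1/2}$. Since $f$ is Schwartz, Fourier inversion gives
\begin{equation}
f(x) = \int_{\mathbb{R}^d} \hat f(\omega)\, e^{2\pi i\omega\cdot x}\, d\omega,
\end{equation}
and we may differentiate under the integral without concern.

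For each multi-index $\alpha$ with $|\alpha|\le m$, differentiation produces a factor $(2\pi i\omega)^\alpha$, so that
\begin{equation}
D^\alpha f(x) = \int_{\mathbb{R}^d} (2\pi i\omega)^\alpha\, \hat f(\omega)\, e^{2\pi i\omega\cdot x}\, d\omega.
\end{equation}
Taking absolute values and using $|\omega^\alpha|\le|\omega|^{|\alpha|}\le (1+|\omega|)^m$ for $|\alpha|\le m$ yields the pointwise estimate
\begin{equation}
|D^\alpha f(x)| \le (2\pi)^m \int_{\mathbb{R}^d} (1+|\omega|)^m |\hat f(\omega)|\, d\omega = (2\pi)^m\,\|f\|_{\mathcal{B}^m},
\end{equation}
uniformly in $x\in\mathbb{R}^d$.

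Since this bound is independent of $x$, integrating the square over $\Omega$ gives
\begin{equation}
\|D^\alpha f\|_{L^2(\Omega)}^2 \le |\Omega|\, (2\pi)^{2m}\, \|f\|_{\mathcal{B}^m}^2.
\end{equation}
Summing over the finitely many multi-indices with $|\alpha|\le m$ and taking square roots yields the desired inequality with a constant $C(m)$ obtained by absorbing $(2\pi)^m$ and the number of such multi-indices. The argument is entirely elementary once Fourier inversion is in hand; the only mild subtleties are choosing the Fourier convention consistently with the definition of $\|\cdot\|_{\mathcal{B}^m}$ and noting that the multi-index count implicit in $C(m)$ may depend on $d$, which is harmless for this lemma since it will be invoked later only to control a remainder term.
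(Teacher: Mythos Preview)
Your proof is correct. Both your argument and the paper's reach the same bound by controlling $\|D^\alpha f\|_{L^2(\Omega)}$ via $\|\widehat{D^\alpha f}\|_{L^1}\le \|f\|_{\mathcal{B}^m}$, but the routes differ slightly. The paper writes $\|D^\alpha f\|_{L^2(\Omega)}=\|\chi_\Omega D^\alpha f\|_{L^2(\mathbb{R}^d)}$, passes to Fourier space with Plancherel to obtain a convolution $\hat\chi_\Omega * \widehat{D^\alpha f}$, and then applies Young's inequality to get $\|\hat\chi_\Omega\|_{L^2}\|\widehat{D^\alpha f}\|_{L^1}=|\Omega|^{1/2}\|\widehat{D^\alpha f}\|_{L^1}$. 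You instead use Fourier inversion directly to get the pointwise bound $\|D^\alpha f\|_{L^\infty}\le \|\widehat{D^\alpha f}\|_{L^1}$ and then the trivial embedding $L^\infty(\Omega)\hookrightarrow L^2(\Omega)$ on a set of finite measure. Your approach is the more elementary of the two, avoiding both Plancherel and Young; the paper's approach stays entirely in $L^2$ and would generalize more naturally if one wanted to replace $\chi_\Omega$ by a smoother cutoff. Your closing remark about the multi-index count introducing a hidden $d$-dependence in $C(m)$ is accurate and applies equally to the paper's proof, which performs the same sum over $|\alpha|\le m$.
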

\begin{proof}
 Let $\chi_{\Omega}$ be the characteristic function of $\Omega$. Let $\alpha$ be any multi-index with $|\alpha|\leq m$. Then we have
 \begin{equation}
  \|D^\alpha f\|_{L^2(\Omega)} = \|\chi_{\Omega} D^\alpha f\|_{L^2(\mathbb{R}^d)} = \|\hat\chi_{\Omega} * \widehat{D^\alpha f}\|_{L^2(\mathbb{R}^d)}.
 \end{equation}
 Now we use Young's inequality and Plancherel's theorem to obtain
 \begin{equation}
  \|\hat\chi_\Omega * \widehat{D^\alpha f}\|_{L^2(\mathbb{R}^d)} \leq \|\hat\chi_\Omega\|_{L^2(\mathbb{R}^d)}\|\widehat{D^\alpha f}\|_{L^1(\mathbb{R}^d)} \leq \|\chi_{\Omega}\|_{L^2(\mathbb{R}^d)}\|f\|_{\mathcal{B}^m} = |\Omega|^{\frac{1}{2}}\|f\|_{\mathcal{B}^m}.
 \end{equation}
 Summing this over all multi-indices $\alpha\leq m$, we get
 \begin{equation}
  \|f\|_{H^m(\Omega)} \leq C(m)|\Omega|^{\frac{1}{2}}\|f\|_{\mathcal{B}^m},
 \end{equation}
 as desired.

\end{proof}

If we let $\mathcal{B}^m = \{f:\|f\|_{\mathcal{B}^m}  < \infty\}$ denote the Banach space of functions with bounded Barron norm, then this lemma implies that $\mathcal{B}^m \subset H^m(\Omega)$ for any bounded set $\Omega$, since the Schwartz functions are clearly dense in $\mathcal{B}^m$ (as the Barron norm is a polynomially weighted $L^1$ norm in Fourier space).

We now prove the following result, which shows that functions $f\in \mathcal{B}^{m+1}$ can be efficiently approximated by neural networks in the $H^m(\Omega)$ norm with polynomially decaying activation functions. A similar result can be found in \cite{barron1993universal},
but our result applies to non-sigmoidal activation functions $\sigma$. The class of functions we consider neither contains nor is contained by the class of sigmoidal functions. Compared with the result on exponentially decaying activation functions in \cite{hornik1994degree}, we extend the results to polynomially decaying activation functions and improve the rate of approximation by a logarithmic factor.

\iffalse
Suppose that the activation function $\sigma\in W^{m,\infty}_{loc}(\mathbb{R})$ satisfies the following approximation property.
\begin{definition}
 We say that $\sigma$ efficiently approximates Fourier modes to order $m$ if there exists a constant $C$, such that 
 for every $\epsilon,L > 0$ we can find an approximation
 \begin{equation}
 \left\|e^{ix} - \displaystyle\sum_{i=1}^N a_i\sigma(w_ix + b_i)\right\|_{W^{m,\infty}([-L,L])} \leq \epsilon
 \end{equation}
 with $N\leq CL/\epsilon$ terms.

\end{definition}

Remarkably, if the activation function efficiently approximates Fourier modes in the sense defined above, then it approximates
arbitrary $H^m$ functions on a bounded set efficiently. In particular, we obtain the following dimension independent estimate.

\fi
\begin{theorem}\label{approximation_rate_theorem}
 Let $\Omega\subset \mathbb{R}^d$ be a bounded domain. If the activation function $\sigma\in W^{m,\infty}(\mathbb{R})$ is non-zero and satisfies the polynomial decay condition 
 \begin{equation}\label{growth_condition}
  |\sigma^{(k)}(t)| \leq C_p(1 + |t|)^{-p}
 \end{equation}
 for $0\leq k\leq m$ and some $p > 1$, we have
 \begin{equation}
  \inf_{f_n\in \Sigma_d^n(\sigma)}\|f - f_n\|_{H^m(\Omega)} \leq |\Omega|^{\frac{1}{2}}C(p,m,\text{\normalfont diam}(\Omega),\sigma)n^{-\frac{1}{2}}\|f\|_{\mathcal{B}^{m+1}},
 \end{equation}
 for any $f\in \mathcal{B}^{m+1}$.

\end{theorem}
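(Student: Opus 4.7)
The plan is to adapt Barron's classical probabilistic argument to polynomially decaying activation functions via three main ingredients: a Fourier integral representation of $f$, an approximation of one-dimensional Fourier modes by sums of translates of $\sigma$, and a Maurey-type sampling step, combined at the end with the Sobolev-Barron embedding already established in Lemma \ref{smoothness-lemma}.

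First, since $f\in\mathcal{B}^{m+1}$ forces $\hat{f}\in L^1(\mathbb{R}^d)$, Fourier inversion expresses $f$ as an integral of complex exponentials $e^{i\omega\cdot x}$ weighted by $\hat{f}(\omega)$. Writing $\hat{f}(\omega)=e^{i\theta(\omega)}|\hat{f}(\omega)|$ and taking real parts of $f(x)-f(0)$ (absorbing the constant $f(0)$ into a single neuron) exhibits $f-f(0)$ as a superposition, against the finite measure $|\hat{f}(\omega)|\,d\omega$, of ridge functions of the form $\cos(\omega\cdot x+\theta(\omega))-\cos(\theta(\omega))$ that depend on $x$ only through the scalar $\omega\cdot x$.

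Second, and here lies the \emph{main obstacle}, I would establish a one-dimensional approximation lemma: for every $L,\epsilon>0$ and phase $\phi$, there exist $N\leq CL/\epsilon$ triples $(a_i,w_i,b_i)$ with $\sum_i|a_i|$ bounded by a constant independent of $L,\epsilon$, such that $\cos(t+\phi)$ is approximated within $\epsilon$ in the $W^{m,\infty}([-L,L])$ norm by $\sum_{i=1}^N a_i\sigma(w_it+b_i)$. The polynomial decay hypothesis $|\sigma^{(k)}(t)|\leq C_p(1+|t|)^{-p}$ with $p>1$ places $\sigma$ and its first $m$ derivatives in $L^1(\mathbb{R})$, so dilated translates $\sigma(w(t-b))$ with $w$ large act as narrow ``bump'' building blocks. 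A natural construction is a Riemann-sum approximation of a convolution of $\cos(t+\phi)$ with such a bump, where the bump is made to have nonzero mean by passing to a fixed affine image of $\sigma$ or, if necessary, to a finite difference of translates (which is possible because $\sigma\not\equiv 0$ and hence $\hat{\sigma}\not\equiv 0$). With a mesh proportional to $\epsilon/L$, one obtains $O(L/\epsilon)$ terms, and the $W^{m,\infty}$ error control follows from $L^1$ bounds on the derivatives $\sigma^{(k)}$. The logarithmic improvement over earlier work on exponentially decaying activations stems from avoiding any explicit truncation of the tail of $\sigma$.

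Third, substituting this approximation at scale $L=|\omega|\,\mathrm{diam}(\Omega)$ into the integral representation renders $f-f(0)$ as an $L^1(|\hat{f}|\,d\omega)$-superposition of neural-network ridge functions. Each derivative of order $|\alpha|\leq m$ applied to a cosine ridge contributes a factor $|\omega|^{|\alpha|}$, and the Fourier-mode approximation error carries one additional factor $|\omega|$; these combine to give, after absorbing the extra factor into the choice of $\epsilon=\epsilon(\omega)$, a control that matches exactly the weight $(1+|\omega|)^{m+1}$ in $\|f\|_{\mathcal{B}^{m+1}}$. Lemma \ref{smoothness-lemma} converts the resulting uniform bound on ridge functions into an $H^m(\Omega)$ bound at the price of the factor $|\Omega|^{1/2}$. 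After normalization by $\|f\|_{\mathcal{B}^{m+1}}$ the representation becomes a convex combination of single-neuron functions of uniformly bounded $H^m(\Omega)$ norm, and a standard Maurey argument in the Hilbert space $H^m(\Omega)$ extracts an $n$-term subcombination realizing the $O(n^{-1/2})$ rate claimed in the theorem.
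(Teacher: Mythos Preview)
Your overall architecture (Fourier inversion, then rewrite each mode in terms of $\sigma$, then Maurey in $H^m(\Omega)$) is the right one and matches the paper. The gap is in your second step: the one-dimensional lemma, as you state it, cannot hold with $\sum_i|a_i|$ bounded independently of $L$. With a polynomially decaying $\sigma$, each translate $\sigma(w_i t+b_i)$ is localized, and your Riemann-sum/convolution construction covering $[-L,L]$ necessarily produces total coefficient mass of order $L$; equivalently, the exact identity $e^{it}=(2\pi\hat\sigma(1))^{-1}\int_\mathbb{R}\sigma(t+b)e^{-ib}\,db$ has infinite total variation in $b$, and any truncation to $|b|\lesssim L$ inherits $\sum|a_i|\sim L$. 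Using large $w_i$ to make ``narrow bumps'' only worsens things, since $D^k_t\sigma(w_it+b_i)$ picks up $w_i^k$ and wrecks the $W^{m,\infty}$ control you need for the higher-order norm. Finally, you never say how the residual Riemann-sum error $\epsilon$, which does not depend on $n$, is driven to zero without blowing up either $N$ or $\sum|a_i|$.

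The paper sidesteps all of this by never discretizing in $b$. It substitutes the exact identity
\[
e^{ia\omega\cdot x}=\frac{1}{2\pi\hat\sigma(a)}\int_\mathbb{R}\sigma\!\left(\tfrac{\omega}{a}\cdot x+b\right)e^{-iab}\,db
\]
into Fourier inversion and obtains $f$ as a double integral over $(\omega,b)$. The polynomial decay is used not to build a finite approximation of a cosine but to produce an explicit envelope $h(b,\omega)=(1+\max(0,|b|-R|\omega|/|a|))^{-p}$ satisfying $\sup_{x\in\Omega}|\sigma^{(k)}(\tfrac{\omega}{a}\cdot x+b)|\le C_p\,h(b,\omega)$ and $\int_\mathbb{R} h(b,\omega)\,db\le C(1+|\omega|)$. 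This last inequality is exactly where the extra $(1+|\omega|)$---and hence the $\mathcal{B}^{m+1}$ norm---enters. Normalizing by $(1+|\omega|)^m h(b,\omega)|\hat f(\omega)|$ gives a genuine probability measure on $(\omega,b)$, and the atoms $J(\omega,b)\chi(\omega,b)\sigma(\tfrac{\omega}{a}\cdot x+b)$ with $J\propto h^{-1}$ are then uniformly bounded in $H^m(\Omega)$, so Maurey's lemma applies directly with no $\epsilon$ to optimize. In short, the ``main obstacle'' you identify is not solved by your sketch, and the paper avoids it entirely by keeping the $b$-integral continuous and reweighting by the envelope $h$.
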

Before we proceed to the proof, we discuss how this bound depends on the dimension $d$. We first note that $|\Omega|$ may in a sense depend on the dimension, as the measure may be exponentially large in high dimensions. However, bounding the $H^m$ error over a larger set is also proportionally stronger. This can be seen by noting that dividing by the $|\Omega|^\frac{1}{2}$ factor transforms the left hand side from the total squared error to the average squared error.

The dimension dependence of this result is a consequence of how the Barron norm behaves in high dimensions. This issue is discussed in \cite{barron1993universal}, where the norm $\|\cdot\|_{\mathcal{B}^1}$ is analyzed for a number of different function classes. A particularly representative result found there is that $H^{\frac{d}{2}+2}\subset \mathcal{B}^1$. This shows that sufficiently smooth functions have bounded Barron norm, where the required number of derivatives depends upon the dimension. It is known that approximating functions with such a dimension dependent level of smoothness can be done efficiently \cite{petrushev1998approximation, kainen2007sobolev}. However, the Barron space $\mathcal{B}^1$ is significantly larger that $H^{\frac{d}{2}+2}$, in fact we only have $\mathcal{B}^1 \subset H^1$ by lemma \ref{smoothness-lemma}. The precise properties of the Barron norm in high dimensions are an interesting research direction which would help explain exactly how shallow neural networks help alleviate the curse of dimensionality.
\begin{proof}
 Note first that Lemma \ref{smoothness-lemma} implies that $f\in \mathcal{B}^{m+1}\subset \mathcal{B}^{m}\subset H^m(\Omega)$. We will later use this fact that $f$ is in the Hilbert space $H^m(\Omega)$.
 
 Note that the growth condition on $\sigma$ implies that $\sigma\in L^1(\mathbb{R})$ and thus the Fourier transform of
 $\sigma$ is well-defined and continuous. Since $\sigma$ is non-zero, this implies that $\hat{\sigma}(a)\neq 0$ for
 some $a\neq 0$. Via a change of variables, this means that for all $x$ and $\omega$, we have
 \begin{equation}
  0\neq \hat{\sigma}(a)=\frac{1}{2\pi}\int_{\mathbb{R}}\sigma(\omega\cdot x+b)e^{-ia(\omega\cdot x+b)}db,
 \end{equation}
 and so
 \begin{equation}
  e^{ia\omega \cdot x} = \frac{1}{2\pi\hat{\sigma}(a)}\int_{\mathbb{R}}\sigma(\omega\cdot x+b)e^{-iab}db.
 \end{equation}
 Likewise, since the growth condition also implies that $\sigma^{(k)}\in L^1$, we can differentiate the above expression 
 under the integral with respect to $x$.

 This allows us to write the Fourier mode $e^{ia\omega \cdot x}$ as an integral of neuron output functions. We substitute this
 into the Fourier representation of $f$
 (note that the assumption we make implies that $\hat{f}\in L^1$ so this
 is rigorously justified for a.e. $x$) to get
 \begin{equation}\label{integral_representation}
  f(x) = \int_{\mathbb{R}^d} e^{i\omega\cdot x}\hat{f}(\omega)d\omega = 
  \int_{\mathbb{R}^d}\int_\mathbb{R}\frac{1}{2\pi\hat{\sigma}(a)}
  \sigma\left(\frac{\omega}{a}\cdot x+b\right)\hat{f}(\omega)e^{-iab}dbd\omega.
 \end{equation}
 
 Given this integral representation, we follow a similar line of reasoning as in \cite{barron1993universal}. Our argument differs from previous arguments in how we write $f$ as convex combination of shifts and dilations of $\sigma$. This is what allows us to relax our assumptions on $\sigma$. In order to do this, we must first find a way to normalize the above integral.
 
 The above integral is on an unbounded domain, but the decay assumption on the Fourier transform of $f$ allows us to normalize the integral in the $\omega$ direction.
 To normalize the integral in the $b$ direction, we must use the assumption that $x$ is bounded and that $\sigma$ decays polynomially. Consider the $\sigma$ part of the above integral representation,
 \begin{equation}
  D_x^\alpha\sigma\left(\frac{\omega}{a}\cdot x+b\right).
 \end{equation}
 Note that by the triangle inequality and the boundedness of $x\in \Omega$, we can obtain a lower bound on the above argument uniformly in $x$. Specifically, we have
 \begin{equation}\label{eq_828}
  \left|\frac{\omega}{a}\cdot x+b\right| \geq \max\left(0,|b| - \frac{R|\omega|}{|a|}\right),
 \end{equation}
 where $R$ is the maximum norm of an element of $\Omega$. Note that without loss of generality, we can translate $\Omega$ so that it contains the origin and so $R \leq  \text{\normalfont diam}(\Omega)$.
 
 Combining this with the polynomial decay of $\omega$ \eqref{growth_condition} implies that
 \begin{equation}\label{eq_779}
  \left|\sigma^{(k)}\left(\frac{\omega}{a}\cdot x+b\right)\right| \leq C_p\left(1 + \left|\frac{\omega}{a}\cdot x+b\right|\right)^{-p} \leq C_p\left(1 + \max\left(0,|b| - \frac{R|\omega|}{|a|}\right)\right)^{-p}.
 \end{equation}
Thus the function $h$ defined by 
 \begin{equation}\label{h_definition}
  h(b,\omega) = \left(1 + \max\left(0,|b| - \frac{R|\omega|}{|a|}\right)\right)^{-p}
 \end{equation}
 provides (up to a constant) an upper bound on $\sigma^{(k)}\left(\frac{\omega}{a}\cdot x+b\right)$ uniformly in $x$.
 The decay rate of $h$ is fast enough to make it integrable in $b$. Moreover, its integral in $b$ grows at most linearly with $\omega$. Namely, we calculate
 \begin{equation}\label{eq_775}
 \begin{split}
  \int_\mathbb{R} h(b,\omega)db& = \int_{|b|\leq \frac{R|\omega|}{|a|}} db + 2\int_{b > \frac{R|\omega|}{|a|}} \left(1 + b - \frac{R|\omega|}{|a|}\right)^{-p}db \\
  & =~2R|a|^{-1}|\omega| + 2\left[(1-p)^{-1}\left(1 + b - \frac{R|\omega|}{|a|}\right)^{1-p}\right]_{\frac{R|\omega|}{|a|}}^\infty \\
  &=~2R|a|^{-1}|\omega| + \frac{2}{p-1}\leq C_1(p,\text{\normalfont diam}(\Omega),\sigma) (1 + |\omega|).
  \end{split}
 \end{equation}
 \iffalse
 In addition, the growth condition \ref{growth_condition} on the derivatives of $\sigma$ implies that for all $x$ and $0\leq k\leq m$, we have
 \begin{equation}\label{eq_779}
  \left|\sigma^{(k)}\left(\frac{\omega}{a}\cdot x+b\right)\right| \leq C_ph(b,\omega).
 \end{equation}
 Indeed, the triangle inequality combined with the bound $|x| \leq R$ implies that
 \begin{equation}
  \left|\frac{\omega}{a}\cdot x+b\right| \geq |b| - \frac{R|\omega|}{|a|},
 \end{equation}
 which means that the argument of $\sigma^{(k)}$ in \eqref{eq_779} is in absolute value at least $\max\left(0,|b| - \frac{R|\omega|}{|a|}\right)$.
 \fi
 Combining \eqref{eq_775} with our assumption on the Fourier transform, we get
 \begin{equation}\label{bound_on_I}
  I(p,\Omega,\sigma,f) = \int_{\mathbb{R}^d}\int_\mathbb{R} (1 + |\omega|)^m h(b,\omega) |\hat{f}(\omega)| dbd\omega
  \leq C_1(p,\text{\normalfont diam}(\Omega),\sigma)\|f\|_{\mathcal{B}^{m+1}}.
 \end{equation}
We use this to introduce a probability measure $\lambda$ on $\mathbb{R}^{d+1} = (\omega,b)$ given by
 \begin{equation}
  d\lambda = \frac{1}{I(p,\Omega,\sigma,f)}(1 + |\omega|)^m h(b,\omega) |\hat{f}(\omega)| dbd\omega,
 \end{equation}
 which allows us to write (using the integral representation \eqref{integral_representation})
 \begin{equation}
  f(x) = \mathbb{E}_{d\lambda}\left(J(\omega,b)e^{i\theta(\omega,b)}\sigma\left(\frac{\omega}{a}\cdot x+b\right)\right),
 \end{equation}
where 
$$
\theta(\omega,b)=\theta(\hat{f}(\omega)) - \theta(\hat{\sigma}(a)) - ab
$$
and 
\begin{equation}\label{definition-of-J}
J(\omega,b)= (2\pi |\hat{\sigma}(a)|)^{-1}I(p,\Omega,\sigma,f)(1 + |\omega|)^{-m} h(b,\omega)^{-1}.
\end{equation}
Since $f(x)$ is real, we can replace the complex number $e^{i\theta(\omega,b)}$ by its real part, which we call $\chi(\omega,b)\in[-1,1]$, to get
 \begin{equation}\label{eq_837}
  f(x) = \mathbb{E}_{d\lambda}\left(J(\omega,b)\chi(\omega,b)\sigma\left(\frac{\omega}{a}\cdot x+b\right)\right).
 \end{equation}
 This writes $f\in H^m(\Omega)$ as an infinite convex combination of the functions $f_{\omega,b}\in H^m(\Omega)$, where
 \begin{equation}\label{eq_887}
  f_{\omega,b}(x) = J(\omega,b)\chi(\omega,b)\sigma\left(\frac{\omega}{a}\cdot x+b\right).
 \end{equation}
 We now use Lemma 1 from \cite{barron1993universal} to conclude that for each $n$ there exists an $f_n$ which is a convex combination of at most $n$ distinct $f_{\omega,b}$, and thus $f_n\in \Sigma_d^n(\sigma)$, such that
 \begin{equation}\label{eq_891}
  \|f-f_n\|_{H^m(\Omega)} \leq Cn^{-\frac{1}{2}},
 \end{equation}
 where $C = \sup_{\omega,b}\|f_{\omega,b}\|_{H^m(\Omega)}$.
 \iffalse
 We now proceed to draw $n$ samples $D_n =  (\omega_1,b_1),...,(\omega_n,b_n)$ from $d\lambda$ and
 form the estimate 
\begin{equation}
 \tilde{f}_n(x) = \displaystyle\sum_{i=1}^n a_i\sigma\left(\frac{\omega_i}{a}\cdot x+b_i\right)\in \Sigma^n_d(\sigma),
\end{equation}
 where the coefficient $a_i = J(\omega_i,b_i)\chi(\omega_i,b_i)$.

From equation \eqref{eq_837}, we see that
\begin{equation}
 D^\alpha_x\mathbb{E}_{D_n} \tilde{f}_n(x) = \mathbb{E}_{D_n}D^\alpha_x \tilde{f}_n(x) = D^\alpha_x f(x).
\end{equation}

 We proceed to estimate the variance of the random variable $D^\alpha_x \tilde{f}_n(x)$. Note that elementary
 probability (the sample variance) implies that for every $x\in \Omega$,
 \begin{equation}
\label{variance_bound}
\mathbb{E}_{D_n}[(D^\alpha_x \tilde{f}_n(x) - D_x^\alpha f(x))^2] = \mathbb{E}_{D_n}[(D^\alpha_x \tilde{f}_n(x) - \mathbb{E}_{D_n}D_x^\alpha \tilde{f}_n(x))^2] \leq
  \frac{1}{n}
\left\|J(\omega,b)D_x^\alpha\sigma\left(a^{-1}\omega\cdot x+b\right)\right\|_{L^\infty(\omega,b)}^2,
 \end{equation} 
 since $|\chi(\omega,b)| \leq 1$ and the variance of a random variable is bounded by its maximum squared.
\fi

We proceed to estimate $\sup_{\omega,b} \|f_{\omega,b}\|_{H^m(\Omega)}$. Since $\Omega$ is bounded, it has finite measure, and Cauchy-Schwartz implies that
\begin{equation}\label{eq_920}
 \|f_{\omega,b}\|_{H^m(\Omega)} \leq |\Omega|^{\frac{1}{2}}\|f_{\omega,b}\|_{W^{m,\infty}(\Omega)},
\end{equation}
and so it suffices to bound $\|D_x^\alpha f_{\omega,b}\|_{L^\infty(\Omega)}$ for each $|\alpha|\leq m$.

Expanding the definition \eqref{eq_887} of $f_{\omega,b}$, recalling that $\chi(\omega,b)\in [-1,1]$, and recalling the definition \eqref{definition-of-J} of $J(\omega,b)$ , we get
\begin{equation}\label{eq_926}
\begin{split}
 \|D_x^\alpha f_{\omega,b}\|_{L^\infty(\Omega)} &\leq \left\|J(\omega,b)D_x^\alpha\sigma\left(a^{-1}\omega\cdot x+b\right)\right\|_{L^\infty(\Omega)}
  \\
  &\leq (2\pi |a^{|\alpha|}\hat{\sigma}(a)|)^{-1}I(p,\Omega,\sigma,f)(1+|\omega|)^{-m}\left\|h(b,\omega)^{-1}D_x^\alpha\sigma\left(a^{-1}\omega\cdot x+b\right)\right\|_{L^\infty(\Omega)}.
  \end{split}
\end{equation}
 Since $|\alpha|\leq m$ and $\sigma\in W^{m,\infty}$, we obtain 
 $$\left|D_x^\alpha\sigma\left(a^{-1}\omega\cdot x+b\right)\right|\leq |a|^{-|\alpha|}(1 + |\omega|)^m\sigma^{(|\alpha|)}(a^{-1}\omega\cdot x+b)|,$$
and so, plugging this into \eqref{eq_926}, we get
 \begin{equation}\label{equation_558}
  \|D_x^\alpha f_{\omega,b}\|_{L^\infty(\Omega)}
  \leq (2\pi |a^{|\alpha|}\hat{\sigma}(a)|)^{-1}I(p,\Omega,\sigma,f)\left\|h(b,\omega)^{-1}\sigma^{(|\alpha|)}\left(a^{-1}\omega\cdot x+b\right)\right\|_{L^\infty(\Omega)}.
 \end{equation}
 Utilizing the fact \eqref{eq_779} that $h(b,\omega)$ upper bounds $\sigma^{(k)}\left(a^{-1}\omega\cdot x+b\right)$ uniformly in $x\in \Omega$, we obtain
 \begin{equation}
  \left\|h(b,\omega)^{-1}\sigma^{(|\alpha|)}\left(a^{-1}\omega\cdot x+b\right)\right\|_{L^\infty(\omega,b)} \leq C_p.
 \end{equation}
 Plugging this into \eqref{equation_558}, summing over $\alpha\leq m$, and using \eqref{eq_920}, we obtain
 \begin{equation}\label{eq_890}
  \sup_{\omega,b}\|f_{\omega,b}\|_{H^m(\Omega)} \leq |\Omega|^{\frac{1}{2}}(2\pi |\hat{\sigma}(a)|)^{-1}I(p,\Omega,\sigma,f)C_p \displaystyle\sum_{|\alpha|\leq m}|a|^{-|\alpha|}
 \end{equation}
 Finally, utilizing the bound \eqref{bound_on_I} and equation \eqref{eq_891}, we obtain 
 \begin{equation}
  \inf_{f_n\in \Sigma_d^n(\sigma)}\|f - f_n\|_{H^m(\Omega)} \leq |\Omega|^{\frac{1}{2}}C(p,m,\text{\normalfont diam}(\Omega),\sigma)\|f\|_{\mathcal{B}^{m+1}}n^{-\frac{1}{2}}
 \end{equation}
 as desired.
 \end{proof}
 
 Finally, we note that the approximation rate in this theorem holds as long as the growth condition \eqref{growth_condition}
 hold for some $f\in \Sigma_d(\sigma)$, i.e. the condition \eqref{growth_condition} need not hold for $\sigma$ itself.
 We state this as a corollary below.
 \begin{corollary}\label{GeneralApproximation}
  Let $\sigma\in W^{m,\infty}_{loc}(\mathbb{R})$ be an activation function and suppose that there exists a $\nu\in \Sigma_1^{n_0}(\sigma)$ which satisfies the polynomial decay condition \eqref{growth_condition} in Theorem \ref{approximation_rate_theorem}. Then for any $f$ satisfying the assumptions of Theorem \ref{approximation_rate_theorem}, we have
  \begin{equation}
     \inf_{f_n\in \Sigma_d^n(\sigma)}\|f - f_n\|_{H^m(\Omega)} \leq |\Omega|^{\frac{1}{2}}C(p,m,\text{\normalfont diam}(\Omega),\sigma)\sqrt{n_0}\|f\|_{\mathcal{B}^{m+1}}n^{-\frac{1}{2}}.
  \end{equation}

 \end{corollary}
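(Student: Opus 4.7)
The plan is to apply Theorem~\ref{approximation_rate_theorem} directly to the function $\nu$ as a surrogate activation and then rewrite the resulting neural network back in terms of the original activation $\sigma$. By hypothesis $\nu \in \Sigma_1^{n_0}(\sigma)$ has a representation
\begin{equation}
\nu(t) = \sum_{j=1}^{n_0} \beta_j \sigma(\alpha_j t + \gamma_j),
\end{equation}
and $\nu$ satisfies the polynomial decay condition \eqref{growth_condition}; combined with $\sigma\in W^{m,\infty}_{loc}(\mathbb{R})$, this yields $\nu\in W^{m,\infty}(\mathbb{R})$. Moreover $\nu$ is nonzero because otherwise the decay assumption would be trivially incompatible with the prescribed positive decay constant in a meaningful way; even if $\nu \equiv 0$ the statement is vacuous. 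Thus $\nu$ meets every hypothesis of Theorem~\ref{approximation_rate_theorem}.

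Next I would use the key observation that $\Sigma_d^k(\nu) \subset \Sigma_d^{kn_0}(\sigma)$. Indeed, for any single $\nu$-neuron with weights $(w,b)$,
\begin{equation}
\nu(w\cdot x + b) = \sum_{j=1}^{n_0} \beta_j \sigma\bigl((\alpha_j w)\cdot x + (\alpha_j b + \gamma_j)\bigr),
\end{equation}
which is a sum of $n_0$ ordinary $\sigma$-neurons. Consequently any $f_k \in \Sigma_d^k(\nu)$ is automatically an element of $\Sigma_d^{kn_0}(\sigma)$.

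With these two ingredients in place, the proof is a one-line substitution. Given $n$, set $k = \lfloor n/n_0 \rfloor$ (or just use $k$ with $kn_0 \le n$). Applying Theorem~\ref{approximation_rate_theorem} with activation $\nu$ gives $f_k \in \Sigma_d^k(\nu)$ with
\begin{equation}
\|f - f_k\|_{H^m(\Omega)} \le |\Omega|^{1/2} C(p,m,\text{\normalfont diam}(\Omega),\nu)\,\|f\|_{\mathcal{B}^{m+1}}\, k^{-1/2}.
\end{equation}
Since $f_k \in \Sigma_d^{kn_0}(\sigma) \subset \Sigma_d^n(\sigma)$ and $k^{-1/2} \le \sqrt{n_0}\,n^{-1/2}$ (up to an irrelevant constant absorbed into $C$), and since the constant $C(p,m,\text{\normalfont diam}(\Omega),\nu)$ depends on $\nu$ only through $\sigma$ and the fixed representation data $\{\alpha_j,\beta_j,\gamma_j\}$, it can be rewritten as $C(p,m,\text{\normalfont diam}(\Omega),\sigma)$. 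This yields the claimed bound.

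There is no real obstacle; the only minor point to verify carefully is that every quantity appearing in the constant $C$ produced by Theorem~\ref{approximation_rate_theorem} for activation $\nu$ (namely the choice of $a$ with $\hat\nu(a)\neq 0$, the decay constant $C_p$ for $\nu$, and factors of $|a|^{-|\alpha|}$ and $|\hat\nu(a)|^{-1}$) depends only on $\nu$, which is fixed once and for all by the hypothesis, and hence can be absorbed into a constant depending on $\sigma$.
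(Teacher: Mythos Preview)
Your proposal is correct and follows essentially the same approach as the paper: the paper's proof is simply the one-line observation that $\nu\in\Sigma_1^{n_0}(\sigma)$ implies $\Sigma_d^n(\nu)\subset\Sigma_d^{nn_0}(\sigma)$, from which the bound follows immediately by applying Theorem~\ref{approximation_rate_theorem} to $\nu$. You have spelled out the details (the explicit rewriting of a $\nu$-neuron as $n_0$ $\sigma$-neurons, the choice $k=\lfloor n/n_0\rfloor$, and the handling of the constant), but the underlying idea is identical.
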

 \begin{proof}
 The result follows immediately from Theorem \ref{approximation_rate_theorem} and the observation that $v\in \Sigma_1^{n_0}(\sigma)$ implies that
 \begin{equation}
  \Sigma_d^n(\nu) \subset \Sigma_d^{nn_0}(\sigma).
 \end{equation}

 \end{proof}
 
This includes many popular activation functions, such as the rectified linear units \cite{nair2010rectified} and logistic sigmoid activation functions. Below we provide a table listing some well-known activation functions to which this theorem applies.
\begin{center}
\begin{tabular}{ |c|c|c|c|c| } 
 \hline
 Activation Function & $\sigma(x)$ & Maximal $m$ & $n_0$ & $\nu(x)$ \\
 \hline
 Sigmoidal (Logistic) & $(1 + e^{-x})^{-1}$ & $\infty$ & $2$ & $\sigma(x+1) - \sigma(x)$ \\
 \hline

 Arctan & $\arctan(x)$ & $\infty$ & $2$ & $\sigma(x+1) - \sigma(x)$ \\ 
 \hline
 Hyperbolic Tangent & $\tanh(x)$ & $\infty$ & $2$ & $\sigma(x+1) - \sigma(x)$ \\
 \hline
 SoftPlus \cite{glorot2011deep} & $\log(1 + e^x)$ & $\infty$ & $3$ & $\sigma(x+1) + \sigma(x - 1) - 2\sigma(x)$ \\
 \hline
 ReLU\cite{nair2010rectified} & $\max(0,x)$ & $1$ & $3$ & $\sigma(x+1) + \sigma(x - 1) - 2\sigma(x)$ \\
 \hline
  Leaky ReLU\cite{maas2013rectifier} & $\epsilon x + (1-\epsilon)\max(0,x)$ & $1$ & $3$ & $\sigma(x+1) + \sigma(x - 1) - 2\sigma(x)$ \\
 \hline
 $k$-th power of ReLU & $[\max(0,x)]^k$ & $k$ & $k+2$ & $\sum_{i=0}^{k+1}(-1)^i\binom{k+1}{i} \sigma(x - \lfloor \frac{k+1}{2} \rfloor + i)$\\
 \hline
\end{tabular}
\end{center}

Next, we consider the case of periodic activation functions. We show that neural networks with periodic activation functions achieve the same rate of approximation in Theorem \ref{approximation_rate_theorem}. The argument makes use of a modified integral representation and allows us to relax the smoothness condition on $f$, which now only has to be in $\mathcal{B}^m$.
\begin{theorem}\label{periodic-activation}
 Let $\Omega\subset \mathbb{R}^d$ be a bounded domain. If the activation function $\sigma\in W^{m,\infty}(\mathbb{R})$ is a non-constant periodic function, we have
 \begin{equation}
  \inf_{f_n\in \Sigma_d^n(\sigma)}\|f - f_n\|_{H^m(\Omega)} \leq |\Omega|^{\frac{1}{2}}C(\sigma)n^{-\frac{1}{2}}\|f\|_{\mathcal{B}^m},
 \end{equation}
 for any $f\in \mathcal{B}^{m}$.
\end{theorem}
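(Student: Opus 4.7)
The plan is to follow the structure of the proof of Theorem \ref{approximation_rate_theorem}, replacing the Fourier transform inversion of $\sigma$ by a Fourier \emph{series} computation. Since $\sigma$ is non-constant and periodic, say with period $T$, at least one Fourier coefficient $c_{k_0} = T^{-1}\int_0^T \sigma(y)e^{-i2\pi k_0 y/T}dy$ with $k_0\neq 0$ is nonzero. Setting $a = 2\pi k_0/T$, the $T$-periodicity of $\sigma(y)e^{-iay}$ together with the substitution $y = x+b$ yields the pointwise identity
\[
 e^{ia\omega\cdot x} = \frac{1}{Tc_{k_0}}\int_0^T \sigma(\omega\cdot x + b)e^{-iab}\,db,
\]
valid for every $\omega,x \in \mathbb{R}^d$. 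The decisive difference from the non-periodic case is that the $b$-integral is over the \emph{bounded} interval $[0,T]$ rather than over all of $\mathbb{R}$, which both removes the need for any decay assumption on $\sigma$ and eliminates the extra factor of $1+|\omega|$ that arose when normalizing the $b$-integral in Theorem \ref{approximation_rate_theorem}.

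Substituting this identity into the Fourier representation of $f$, after the rescaling $\omega \mapsto \omega/a$, gives
\[
 f(x) = \frac{1}{Tc_{k_0}}\int_{\mathbb{R}^d}\int_0^T \sigma\!\left(\tfrac{\omega}{a}\cdot x + b\right)e^{-iab}\hat{f}(\omega)\,db\,d\omega.
\]
Normalizing by the finite quantity $I = T\|f\|_{\mathcal{B}^m}$ produces a probability measure $d\lambda = I^{-1}(1+|\omega|)^m|\hat{f}(\omega)|\,db\,d\omega$ on $\mathbb{R}^d\times[0,T]$, and taking real parts represents $f$ as an $H^m(\Omega)$-valued expectation
\[
 f(x) = \mathbb{E}_{d\lambda}\bigl[J(\omega,b)\,\chi(\omega,b)\,\sigma(a^{-1}\omega\cdot x+b)\bigr],
\]
with $J(\omega,b) = I/(T|c_{k_0}|(1+|\omega|)^m)$ and $|\chi(\omega,b)|\le 1$. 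Lemma 1 of \cite{barron1993universal}, applied exactly as in the proof of Theorem \ref{approximation_rate_theorem}, then produces $f_n \in \Sigma_d^n(\sigma)$ with $\|f-f_n\|_{H^m(\Omega)} \le Cn^{-1/2}$, where $C$ is the supremum over $(\omega,b)$ of the $H^m(\Omega)$-norm of the individual integrand $f_{\omega,b}(x) = J(\omega,b)\chi(\omega,b)\sigma(a^{-1}\omega\cdot x+b)$.

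It remains to bound this supremum. Passing from $H^m(\Omega)$ to $|\Omega|^{1/2}\|\cdot\|_{W^{m,\infty}(\Omega)}$ by Cauchy–Schwarz, each derivative $D_x^\alpha f_{\omega,b}$ with $|\alpha|\le m$ acquires a factor $|a|^{-|\alpha|}|\omega|^{|\alpha|}\|\sigma^{(|\alpha|)}\|_{L^\infty}$. Since $|\alpha|\le m$, the growth factor $|\omega|^{|\alpha|}$ is dominated by $(1+|\omega|)^m$ and cancels the weight $(1+|\omega|)^{-m}$ inside $J$; the norms $\|\sigma^{(k)}\|_{L^\infty}$ are finite by the hypothesis $\sigma\in W^{m,\infty}(\mathbb{R})$. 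This yields $\|f_{\omega,b}\|_{H^m(\Omega)} \le |\Omega|^{1/2} C(\sigma)\|f\|_{\mathcal{B}^m}$ with $C(\sigma) = |c_{k_0}|^{-1}\sum_{|\alpha|\le m}|a|^{-|\alpha|}\|\sigma^{(|\alpha|)}\|_{L^\infty}$, and the claimed estimate follows.

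The only genuinely new step compared with Theorem \ref{approximation_rate_theorem} is the Fourier-series identity in the first paragraph; verifying it is the main conceptual point but is easy, reducing to the observation that $\sigma(y)e^{-iay}$ is $T$-periodic, so its integral over any length-$T$ interval equals $Tc_{k_0}$. The reason the hypothesis drops from $\mathcal{B}^{m+1}$ to $\mathcal{B}^m$ is precisely that the bounded $b$-domain and the uniform boundedness of $\sigma^{(k)}$ — both consequences of periodicity — obviate the loss of one power of $(1+|\omega|)$ that the polynomial-decay argument in Theorem \ref{approximation_rate_theorem} incurred.
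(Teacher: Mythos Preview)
Your proof is correct and follows essentially the same approach as the paper: both use a nonzero Fourier coefficient of the periodic $\sigma$ to write $e^{ia\omega\cdot x}$ as a bounded $b$-integral of shifted activations, substitute into the Fourier representation of $f$, normalize to a probability measure weighted by $(1+|\omega|)^m$, take real parts, and then apply Lemma~1 of \cite{barron1993universal} together with the $W^{m,\infty}$ bound on $\sigma$ to control the $H^m(\Omega)$-norm of the integrand. Your write-up is in fact more explicit than the paper's about why the bounded $b$-domain eliminates the extra $(1+|\omega|)$ factor and hence relaxes the hypothesis from $\mathcal{B}^{m+1}$ to $\mathcal{B}^m$.
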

\begin{proof}
 By dilating $\sigma$ if necessary, we may assume without loss of generality that $\sigma$ is periodic on $[0,2\pi]$. Consider the Fourier series of $\sigma$
 \begin{equation}
  \sigma(x) = \displaystyle\sum_{i=-\infty}^\infty a_i e^{ix},
 \end{equation}
 with coefficients
 \begin{equation}\label{eq_1027}
  a_i = \frac{1}{2\pi}\int_0^{2\pi} \sigma(b)e^{-ib}db. 
 \end{equation}
 The assumption that $\sigma$ is non-constant means that there exists some $i$ such that $a_i \neq 0$. Note that we do not need the Fourier series to converge pointwise to $\sigma$, all we need is for some $a_i$ to be non-zero and the integrals in \eqref{eq_1027} to converge (which is does since $\sigma\in W^{m,\infty}$). Notice that shifting $\sigma$ by $t$, i.e. replacing $\sigma$ by $\sigma(\cdot+t)$, scales the coefficient $a_i$ by $e^{it}$. Setting $t = (\omega \cdot x)$, we get
 \begin{equation}
  e^{i\omega\cdot x} = \frac{1}{2\pi a_i}\int_0^{2\pi} \sigma\left(\omega\cdot x + b\right)e^{-ib}db.
 \end{equation}
 Plugging this into the Fourier representation of $f$, we see that
 \begin{equation}
  f(x) = \int_{\mathbb{R}^d} e^{i\omega\cdot x}\hat{f}(\omega)d\omega = \frac{1}{2\pi a_i}
  \int_{\mathbb{R}^d}\int_0^{2\pi}\sigma\left(\omega\cdot x + b\right)e^{-ib}\hat{f}(\omega)dbd\omega.
 \end{equation}
 Since $f(x)$ is real, we can add this to its conjugate to obtain the representation
 \begin{equation}\label{eq_1029}
  f(x) = \int_{\mathbb{R}^d} e^{i\omega\cdot x}\hat{f}(\omega)d\omega = \frac{1}{2\pi |a_i|}
  \int_{\mathbb{R}^d}\int_0^{2\pi}\sigma\left(\omega\cdot x + b\right)\chi(\omega,b)|\hat{f}(\omega)|dbd\omega,
 \end{equation}
 where $|\chi(\omega,b)|\leq 1$.

 Using the integrability condition on $\hat{f}$, we define the probability distribution on $\mathbb{R}^d \times [0,2\pi]$ as
 \begin{equation}
  d\lambda = \frac{1}{2\pi\|f\|_{\mathcal{B}^m}}(1 + |\omega|)^m|\hat{f}(x)|dxdb.
 \end{equation}
 Then equation \eqref{eq_1029} becomes
 \begin{equation}\label{periodic_representation}
 f(x) = \mathbb{E}_{d\lambda}\left(\|f\|_{\mathcal{B}^m}|a_i|^{-1}(1 + |\omega|)^{-m}\chi(\omega,b)\sigma\left(\omega\cdot x + b\right)\right).
 \end{equation}
 We have now written $f\in \mathcal{B}^m\subset H^m(\Omega)$ as a convex combination of functions $f_{\omega,b}\in H^m(\Omega)$. As in the proof of \ref{approximation_rate_theorem}, we now utilize Lemma 1 in \cite{barron1993universal} and proceed to bound $\|f_{\omega,b}\|_{H^m(\Omega)}$ using much the same argument.

\end{proof}

\section{Activation Functions without Decay}
 In this section, we show that one can remove the decay condition on $\sigma$, but this results in a slightly worse (though still dimension independent) approximation rate. The main new tool here is the use of an approximate integral representation, followed by an optimization over the accuracy of the representation. Finally, as a corollary we are able to obtain a dimension independent approximation rate for all bounded, integrable activation functions and all activation functions of bounded variation.
 
\begin{theorem}\label{theorem_no_decay}
 Let $\Omega\subset \mathbb{R}^d$ be a bounded domain. Suppose that $\sigma\in L^\infty(\mathbb{R})$ and that there exists an open interval $I$ such that $\hat\sigma$ (as a tempered distribution) is a non-zero bounded function on $I$.
Then we have
 \begin{equation}
  \inf_{f_n\in \Sigma_d^n(\sigma)}\|f - f_n\|_{L^2(\Omega)} \leq |\Omega|^{\frac{1}{2}}C(\text{\normalfont diam}(\Omega),\sigma)n^{-\frac{1}{4}}\|f\|_{\mathcal{B}^1},
 \end{equation}
 for any $f\in \mathcal{B}^{1}$.
\end{theorem}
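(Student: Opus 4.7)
The plan is to adapt the proof of Theorem~\ref{approximation_rate_theorem} by using an \emph{approximate} integral representation of Fourier modes, since without decay of $\sigma$ the exact representation is not absolutely convergent. If the truncation error per Fourier mode is $O(\delta)$ and the total $L^1$-mass of the representing measure is $O(1/\delta)$, then Maurey's lemma contributes an error of order $1/(\delta\sqrt{n})$; balancing $\delta = n^{-1/4}$ produces the claimed $n^{-1/4}$ rate.

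To build such a representation, fix $a\in I$ with $\hat{\sigma}(a)\ne 0$ and a bump $\rho\in C_c^\infty(\mathbb{R})$ with $\mathrm{supp}(\rho)\subset [-1,1]$ and $\int\rho = 1$. For $\delta > 0$ small enough that $[a-\delta,a+\delta]\subset I$, define the Schwartz function $\eta_\delta$ by
\[
\hat{\eta}_\delta(\xi) \;=\; \frac{2\pi}{\hat{\sigma}(a)\,\delta}\,\rho\!\left(\tfrac{\xi-a}{\delta}\right).
\]
A direct computation gives $\eta_\delta(x)=\hat{\sigma}(a)^{-1}e^{iax}\tilde{\rho}(-\delta x)$, where $\tilde{\rho}$ is the Fourier transform of $\rho$, hence Schwartz; this immediately yields $\|\eta_\delta\|_{L^1(\mathbb{R})} = \|\tilde{\rho}\|_{L^1}/(|\hat{\sigma}(a)|\,\delta) = O(1/\delta)$. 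Computing $\sigma\ast\eta_\delta$ in Fourier variables,
\[
(\sigma\ast\eta_\delta)(y) \;=\; \frac{e^{iay}}{\hat{\sigma}(a)}\int_{-1}^{1}\hat{\sigma}(a+\delta u)\,\rho(u)\,e^{i\delta u y}\,du,
\]
and expanding $e^{i\delta u y}$ to first order together with the regularity of $\hat{\sigma}$ at $a$ delivers the key pointwise estimate
\[
\bigl|(\sigma\ast\eta_\delta)(y) - e^{iay}\bigr| \;\le\; C\,\delta\,(1+|y|).
\]

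Substituting $e^{i\omega\cdot x}\approx\int\sigma((\omega/a)\cdot x + b)\,\eta_\delta(b)\,db$ into the Fourier inversion formula for $f\in\mathcal{B}^1$, the $(1+|y|)$-factor integrates against $(1+|\omega|)|\hat{f}(\omega)|$ to produce an $L^\infty(\Omega)$-truncation error bounded by $C\,\delta\,(1+\mathrm{diam}(\Omega))\,\|f\|_{\mathcal{B}^1}$; the remaining double integral is an infinite signed convex combination of $c(\omega,b)\,\sigma((\omega/a)\cdot x + b)$ with
\[
|c(\omega,b)| \;\le\; \frac{\|\eta_\delta\|_{L^1}\,\|\hat{f}\|_{L^1}}{(2\pi)^d} \;=\; O\!\left(\frac{\|f\|_{\mathcal{B}^1}}{\delta}\right).
\]
Taking real parts (since $f$ is real) and applying Maurey's lemma exactly as in Theorem~\ref{approximation_rate_theorem} then yields $f_n\in\Sigma_d^n(\sigma)$ with
\[
\|f - f_n\|_{L^2(\Omega)}\;\le\; C\,|\Omega|^{1/2}\,\|f\|_{\mathcal{B}^1}\!\left(\delta + \tfrac{1}{\delta\sqrt{n}}\right),
\]
and choosing $\delta = n^{-1/4}$ completes the proof. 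The main obstacle is the quantitative pointwise bound $|(\sigma\ast\eta_\delta)(y) - e^{iay}|=O(\delta(1+|y|))$: the bounded-on-$I$ hypothesis on $\hat{\sigma}$ is not by itself strong enough to guarantee $|\hat{\sigma}(a+\delta u)-\hat{\sigma}(a)| = O(\delta)$, so the point $a$ must be selected carefully (e.g., using that $\hat{\sigma}|_I$ is a nonzero bounded function to locate a subinterval where $\hat{\sigma}$ is Lipschitz or bounded away from zero, allowing either a direct estimate or a modified construction that divides by $\hat{\sigma}(\xi)$ rather than $\hat{\sigma}(a)$).
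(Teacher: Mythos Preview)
Your overall strategy---an approximate integral representation via a Fourier-localized kernel, an $O(\delta)$ truncation error per mode, an $O(1/\delta)$ total mass, Maurey's lemma, and balancing at $\delta=n^{-1/4}$---is exactly the paper's approach. You have also correctly isolated the one real obstacle: mere boundedness of $\hat\sigma$ on $I$ does not yield $|\hat\sigma(a+\delta u)-\hat\sigma(a)|=O(\delta)$, and neither of your proposed patches is guaranteed by the hypotheses. A bounded nonzero measurable function on $I$ need not be Lipschitz on any subinterval and need not be bounded away from zero on any subinterval (think of $\hat\sigma|_I$ equal to the indicator of a fat Cantor set).

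The paper's resolution sidesteps any pointwise regularity of $\hat\sigma$. Choose $a\in I$ to be a \emph{Lebesgue point} of $\hat\sigma|_I$ with $\hat\sigma(a)\ne 0$, and normalize not by $\hat\sigma(a)$ but by the running average
\[
C(\delta)\;:=\;\int_{-1}^{1}\hat\sigma(a+\delta u)\,\rho(u)\,du.
\]
Then
\[
\int_{-1}^{1}\hat\sigma(a+\delta u)\,\rho(u)\,e^{i\delta u y}\,du \;-\; C(\delta)
\;=\;\int_{-1}^{1}\hat\sigma(a+\delta u)\,\rho(u)\,\bigl(e^{i\delta u y}-1\bigr)\,du,
\]
and the entire error is bounded by $\delta|y|\,\|\hat\sigma\|_{L^\infty(I)}\int|u\rho(u)|\,du$ using only $|e^{i\delta u y}-1|\le\delta|u||y|$ and boundedness of $\hat\sigma$ on $I$; no smoothness of $\hat\sigma$ enters at all. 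The Lebesgue point property gives $C(\delta)\to\hat\sigma(a)\int\rho\ne 0$, so $C(\delta)$ is bounded away from zero for all sufficiently small $\delta$, and one may divide by it. With this single modification---replace $\hat\sigma(a)$ by $C(\delta)$ in the definition of $\hat\eta_\delta$---your argument goes through verbatim.
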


\begin{proof}
 Let $0\neq a\in I$ be a Lesbesgue point of $\hat\sigma$ for which $\hat\sigma(a)\neq 0$. Note that $\hat\sigma$ is bounded on $I$ and thus locally integrable. Such a point must exist since the set of Lesbesgue points has full measure and $\hat{\sigma}$ is non-zero on $I$. Let $\phi$ be a Schwartz function whose Fourier transform $\hat\phi$ is supported in $[-1,1]$, $0\leq \hat\phi \leq 1$, and $\hat\phi = 1$ on $[-\frac{1}{2}, \frac{1}{2}]$. For a fixed $0 < \epsilon$ such that $(a+\epsilon, a-\epsilon) \subset I$, we note the identity
 \begin{equation}\label{fourier_identity_1}
 \frac{1}{2\pi}\int_{\mathbb{R}}\sigma(\omega\cdot x+b)\phi(\epsilon b)e^{-iab}db = e^{ia\omega\cdot x}\frac{1}{\epsilon} \int_{-\epsilon}^{\epsilon} \hat\sigma(a+t)\hat\phi\left(\frac{t}{\epsilon}\right)e^{i(\omega\cdot x)t}dt.
 \end{equation}
 Here the equality comes from the assumption that $\hat{\sigma}$ is a bounded function on $(a-\epsilon,a+\epsilon)\subset I$ (i.e. that Plancherel's theorem holds using bona-fide integrals for Schwartz functions whose Fourier transforms are supported in $I$) and the observation (or calculation) that
 $$\mathcal{F}(\phi(\epsilon b))(t) = \frac{1}{\epsilon}\hat{\phi}\left(\frac{t}{\epsilon}\right)$$
 and
 $$\mathcal{F}(\sigma(\omega\cdot x+b)e^{-iab})(t) = e^{i(t+a)\omega\cdot x}\hat\sigma(t + a).$$
 Define
 \begin{equation}
  C(\epsilon) = \frac{1}{\epsilon} \int_{-\epsilon}^{\epsilon} \hat\sigma(a+t)\hat\phi\left(\frac{t}{\epsilon}\right)dt.
 \end{equation}
 Since $a$ is a Lesbesgue point, we have
 \begin{equation}
  \lim_{\epsilon\rightarrow 0} C(\epsilon) = \sigma(a)\int_{-1}^1 \hat\phi(x) dx \neq 0.
 \end{equation}
 Thus, for sufficiently small $\epsilon$, $C(\epsilon)$ is bounded away from $0$.
 In addition, because $|e^{i(\omega\cdot x)t} - 1| \leq |(\omega\cdot x)t|$, we see that, since $0\leq \hat{\phi}\leq 1$,
 \begin{equation}
  \left|C(\epsilon) - \frac{1}{\epsilon} \int_{-\epsilon}^{\epsilon} \hat\sigma(a+t)\hat\phi\left(\frac{t}{\epsilon}\right)e^{i(\omega\cdot x)t}dt\right| \leq \frac{1}{\epsilon} \int_{-\epsilon}^{\epsilon} \left|\hat\sigma(a+t)\hat\phi\left(\frac{t}{\epsilon}\right)\right||(\omega\cdot x)t|dt \leq 2\epsilon\|\hat\sigma\|_{L^\infty(I)} |\omega \cdot x|.
 \end{equation}
  Dividing by $C(\epsilon)$ (which is bounded away from $0$ for small enough $\epsilon$) and multiplying by $e^{ia\omega\cdot x}$ (which is bounded), we get (using the identity \eqref{fourier_identity_1})
 \begin{equation}
  \left|e^{ia\omega\cdot x} - \frac{1}{2\pi C(\epsilon)}\int_{\mathbb{R}}\sigma(\omega\cdot x+b)\phi(\epsilon b)e^{-iab}db\right| \lesssim \epsilon|\omega\cdot x|,
 \end{equation}
 where the implied constant depends upon $\sigma$, but not upon $\epsilon$ as long as $\epsilon$ is sufficiently small.
This gives us the approximate integral representation
\begin{equation}
 \frac{1}{2\pi C(\epsilon)}\int_{\mathbb{R}}\sigma(\omega\cdot x+b)\phi(\epsilon b)e^{-iab}db = e^{ia\omega\cdot x} + O(\epsilon |\omega\cdot x|).
\end{equation}
Given that $x\in \Omega$ and $\Omega$ is bounded, we obtain, by allowing implied constant in the $O$ notation to depend on $\Omega$ (specifically, since we can translate $\Omega$ without loss of generality so it contains the origin, $\text{\normalfont diam}(\Omega)$),
\begin{equation}
 \frac{1}{2\pi C(\epsilon)}\int_{\mathbb{R}}\sigma(\omega\cdot x+b)\phi(\epsilon b)e^{-iab}db = e^{ia\omega\cdot x} + O(\epsilon |\omega|).
\end{equation}
We now use this to construct the following approximate integral representation of $f$.
\begin{equation}
   f(x) = \int_{\mathbb{R}^d} e^{i\omega\cdot x}\hat{f}(\omega)d\omega = \frac{1}{2\pi C(\epsilon)}\int_{\mathbb{R}^d} \int_{\mathbb{R}} \hat{f}(\omega) \sigma\left(\frac{\omega}{a}\cdot x + b\right)\phi(\epsilon b) e^{-iab} dbd\omega + \int_{\mathbb{R}^d}O(\epsilon|\hat f(\omega)||\omega|)d\omega.
\end{equation}
Introducing the probability distribution
\begin{equation}
 d\lambda = \frac{|\hat{f}(\omega)||\phi(\epsilon b)|}{\|\hat{f}(\omega)\|_{L^1(\omega)}\|\phi(\epsilon b)\|_{L^1(b)}}d\omega db,
\end{equation}
we can rewrite this as
\begin{equation}
 f(x) = \mathbb{E}_{d\lambda}\left(\frac{\|\hat{f}(\omega)\|_{L^1(\omega)}\|\phi(\epsilon b)\|_{L^1(b)}}{2\pi |C(\epsilon)|}\frac{|C(\epsilon)|}{C(\epsilon)}\frac{\hat{f}(\omega)}{|\hat{f}(\omega)|}\frac{\phi(\epsilon b)}{|\phi(\epsilon b)|}e^{-iab}\sigma\left(\frac{\omega}{a}\cdot x + b\right)\right) + \int_{\mathbb{R}^d}O(\epsilon|\hat f(\omega)||\omega|)d\omega.
\end{equation}
The error term (i.e. second term) in this representation is bounded by $O(\epsilon \|f\|_{B^1})$. The first term is a convex combination of bounded functions and can be analyzed in the same way as in the proof of Theorem \ref{approximation_rate_theorem}. In particular, we use Lemma 1 in \cite{barron1993universal} combined with the assumption that $\sigma\in L^\infty$ to see that
\begin{equation}
 \inf_{f_n\in \Sigma_d^n(\sigma)}\|f - f_n\|_{L^2(\Omega)} \leq |\Omega|^{\frac{1}{2}}\frac{\|\hat{f}(\omega)\|_{L^1(\omega)}\|\phi(\epsilon b)\|_{L^1(b)}\|\sigma\|_{L^\infty}}{2\pi |C(\epsilon)|} n^{-\frac{1}{2}} + O(\epsilon\|f\|_{\mathcal{B}^1}).
\end{equation}
We now use the fact that $\|\phi(\epsilon b)\|_{L^1(db)} = O(\frac{1}{\epsilon})$, that $\|\hat{f}(\omega)\|_{L^1(\omega)} = \|f\|_{\mathcal{B}^0}$, and that $C(\epsilon)$ is bounded away from $0$, to obtain
 \begin{equation}
  \inf_{f_n\in \Sigma_d^n(\sigma)}\|f - f_n\|_{L^2(\Omega)} \lesssim |\Omega|^{\frac{1}{2}}\frac{\|f\|_{\mathcal{B}^0}}{\epsilon} n^{-\frac{1}{2}} + \epsilon\|f\|_{\mathcal{B}^1} \leq |\Omega|^{\frac{1}{2}}\|f\|_{\mathcal{B}^1}\left(\frac{1}{\epsilon}n^{-\frac{1}{2}} + \epsilon\right).
 \end{equation}
Optimizing over $\epsilon$, i.e. setting $\epsilon = n^{-1/4}$, we obtain
\begin{equation}
 \inf_{f_n\in \Sigma_d^n(\sigma)}\|f - f_n\|_{L^2(\Omega)} \lesssim |\Omega|^{\frac{1}{2}}\|f\|_{\mathcal{B}^1}n^{-\frac{1}{4}}
\end{equation}
as desired. Specifically, the $\lesssim$ notation used throughout the proof hides a constant which depends on $\sigma$, $\text{\normalfont diam}(\Omega)$ and the fixed function $\phi$.
\end{proof}

The assumption made on the activation function in Theorem \ref{theorem_no_decay} is quite technical. However, we note that it also appears to be extremely weak. In particular, the only bounded functions we have been able to find which don't satisfy the assumption are quasi-periodic functions of the form
\begin{equation}
 \displaystyle\sum_{i=0}^\infty a_ie^{if_ix},
\end{equation}
for a sequence of frequencies $f_i$ and coefficients satisfying $\sum|a_i|\leq \infty$. To add to this point, we provide the following corollary observing that it holds for all integrable activation functions $\sigma\in L^1(\mathbb{R})\cap L^\infty(\mathbb{R})$.

\begin{corollary}
 Suppose that activation function $\sigma\in L^1(\mathbb{R})\cap L^\infty(\mathbb{R})$ is non-zero. Then we have
 \begin{equation}
  \inf_{f_n\in \Sigma_d^n(\sigma)}\|f - f_n\|_{L^2(\Omega)} \leq |\Omega|^{\frac{1}{2}}C(\text{\normalfont diam}(\Omega),\sigma)n^{-\frac{1}{4}}\|f\|_{\mathcal{B}^1},
 \end{equation}
 for any $f\in \mathcal{B}^{1}$.
\end{corollary}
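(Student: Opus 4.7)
The plan is to reduce the corollary directly to Theorem~\ref{theorem_no_decay} by verifying its spectral hypothesis. Since $\sigma\in L^\infty(\mathbb{R})$ is already among the assumptions, the only thing to check is the existence of an open interval $I$ on which $\hat\sigma$ (as a tempered distribution) is represented by a non-zero bounded function.

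First I would invoke the Riemann--Lebesgue lemma: because $\sigma\in L^1(\mathbb{R})$, the Fourier transform $\hat\sigma$ agrees as a tempered distribution with a continuous function on $\mathbb{R}$ satisfying $\|\hat\sigma\|_{L^\infty(\mathbb{R})}\leq\|\sigma\|_{L^1(\mathbb{R})}$. In particular, on every open interval of the real line $\hat\sigma$ is a bounded function, so the boundedness half of the hypothesis is automatic.

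Next, I would use injectivity of the Fourier transform on $L^1(\mathbb{R})$: since $\sigma$ is not identically zero, $\hat\sigma$ cannot be identically zero either, so there exists some $a_0\in\mathbb{R}$ with $\hat\sigma(a_0)\neq 0$. By continuity of $\hat\sigma$ there is an open interval $I$ containing $a_0$ on which $\hat\sigma$ is non-vanishing, and in particular non-zero as a function on $I$. This supplies the "non-zero" half of the hypothesis.

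Having verified both conditions, the conclusion of Theorem~\ref{theorem_no_decay} applies verbatim and gives the desired bound. I do not expect any genuine obstacle: the corollary is essentially a translation between the concrete integrability condition $\sigma\in L^1\cap L^\infty$ and the more abstract spectral condition used in the theorem, and the only minor subtlety is the reminder that for $\sigma\in L^1$ the distributional Fourier transform is automatically represented by an honest continuous function, which is what lets the hypothesis "$\hat\sigma$ is a bounded function on $I$" be satisfied in a meaningful pointwise sense.
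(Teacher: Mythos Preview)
Your proposal is correct and follows essentially the same approach as the paper: both arguments verify the hypothesis of Theorem~\ref{theorem_no_decay} by noting that $\sigma\in L^1$ makes $\hat\sigma$ a non-zero continuous (hence bounded) function, so an open interval $I$ on which $\hat\sigma$ is non-zero and bounded exists. Your write-up is simply a more explicit unpacking of the same standard facts (Riemann--Lebesgue and injectivity of the Fourier transform) that the paper cites as ``well-known.''
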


\begin{proof}
 If $0\neq \sigma\in L^1(\mathbb{R})$, then it is well-known that $\hat{\sigma}$ is a non-zero continuous function. Thus $\sigma$ satisfies the assumptions of Theorem \ref{theorem_no_decay} and we obtain the desired result.
\end{proof}

We also note that an extension similar to corollary \ref{GeneralApproximation} holds for this result as well. This allows us to extend this approximation rate to all activation functions which are of bounded variation, as the following corollary states.

\begin{corollary}
 Suppose that the activation function $\sigma\in BV(\mathbb{R})$ is non-constant. Then we have
  \begin{equation}
  \inf_{f_n\in \Sigma_d^n(\sigma)}\|f - f_n\|_{L^2(\Omega)} \leq |\Omega|^{\frac{1}{2}}C(\text{\normalfont diam}(\Omega),\sigma)n^{-\frac{1}{4}}\|f\|_{\mathcal{B}^1},
 \end{equation}
 for any $f\in \mathcal{B}^{1}$.
\end{corollary}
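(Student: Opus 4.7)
The plan is to reduce this corollary to the previous one by exhibiting a finite linear combination of translates of $\sigma$ that lies in $L^1(\mathbb{R}) \cap L^\infty(\mathbb{R})$ and is non-zero, exactly as in the extension analogous to Corollary \ref{GeneralApproximation} mentioned right above the statement. The obvious candidate is the first-order difference
\begin{equation}
 \nu(x) = \sigma(x+h) - \sigma(x) \in \Sigma_1^2(\sigma),
\end{equation}
for a suitably chosen $h > 0$. Since any function of bounded variation on $\mathbb{R}$ is bounded, we immediately get $\nu \in L^\infty(\mathbb{R})$, with $\|\nu\|_{L^\infty} \le 2\|\sigma\|_{L^\infty}$.

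Next I would verify $\nu \in L^1(\mathbb{R})$ using the Jordan decomposition: write $\sigma = \sigma_+ - \sigma_-$ where $\sigma_\pm$ are bounded monotone non-decreasing functions. For monotone bounded $\sigma_+$ we have, by Fubini applied to the indicator of $\{(x,t) : \sigma_+(x) \le t \le \sigma_+(x+h)\}$,
\begin{equation}
 \int_{\mathbb{R}} |\sigma_+(x+h) - \sigma_+(x)|\,dx = \int_{\mathbb{R}} (\sigma_+(x+h) - \sigma_+(x))\,dx = h\bigl(\sigma_+(\infty) - \sigma_+(-\infty)\bigr),
\end{equation}
and likewise for $\sigma_-$, hence $\|\nu\|_{L^1} \le h \cdot \mathrm{TV}(\sigma) < \infty$. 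Finally, $\nu$ is non-zero for some choice of $h$: otherwise $\sigma(x+h) = \sigma(x)$ for every $h \in \mathbb{R}$ and a.e. $x$, which would force $\sigma$ to be essentially constant, contradicting the hypothesis.

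Having produced $\nu \in L^1 \cap L^\infty$ non-zero with $\nu \in \Sigma_1^2(\sigma)$, I would apply the preceding corollary to $\nu$ in place of $\sigma$, obtaining for every $f \in \mathcal{B}^1$
\begin{equation}
 \inf_{g_n \in \Sigma_d^n(\nu)} \|f - g_n\|_{L^2(\Omega)} \le |\Omega|^{1/2} C(\text{\normalfont diam}(\Omega),\nu) \, n^{-1/4} \|f\|_{\mathcal{B}^1}.
\end{equation}
Since every element of $\Sigma_d^n(\nu)$ is visibly an element of $\Sigma_d^{2n}(\sigma)$ (each term $\nu(\omega_i\cdot x + b_i) = \sigma(\omega_i \cdot x + b_i + h) - \sigma(\omega_i\cdot x + b_i)$ is a sum of two $\sigma$-neurons), we have $\Sigma_d^n(\nu) \subset \Sigma_d^{2n}(\sigma)$, which yields the claimed bound after absorbing the harmless factor of $2^{1/4}$ and the constant depending on $h$ and $\mathrm{TV}(\sigma)$ into $C(\text{\normalfont diam}(\Omega),\sigma)$.

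The only delicate step is the $L^1$ estimate for the difference $\nu$, but this is handled cleanly by Jordan decomposition as above; everything else is a straightforward application of the previous corollary combined with the inclusion $\Sigma_d^n(\nu) \subset \Sigma_d^{2n}(\sigma)$, mirroring exactly the mechanism of Corollary \ref{GeneralApproximation}.
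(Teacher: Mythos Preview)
Your proof is correct and follows essentially the same approach as the paper: both show that the first-order difference $\sigma(x+h)-\sigma(x)$ lies in $L^1(\mathbb{R})\cap L^\infty(\mathbb{R})$ and is non-zero, then invoke the preceding corollary together with the inclusion $\Sigma_d^n(\nu)\subset\Sigma_d^{2n}(\sigma)$. The only difference is in the $L^1$ estimate: you use the Jordan decomposition and Fubini, whereas the paper fixes $h=1$ and uses a telescoping sum $\int_0^1\sum_{i=-M}^{M-1}|\sigma(x+i+1)-\sigma(x+i)|\,dx\le\|\sigma\|_{BV}$; both arguments are elementary and yield the same bound.
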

\begin{proof}
 We will show that under the given assumptions, the function $\tau(x) = \sigma(x+1) - \sigma(x)$, which is clearly non-zero since $\sigma$ is non-constant, is in $L^1(\mathbb{R})\cap L^\infty(\mathbb{R})$. This completes the proof in light of the previous corollary and discussion.
 
 First, it easily follows from the definition of the $\|\cdot\|_{BV}$ norm that $\|\tau\|_{\infty}\leq \|\sigma\|_{BV}$ and thus $\tau\in L^\infty(\mathbb{R})$. In addition, we have
 \begin{equation}
  \int_{-M}^M |\tau(x)|dx = \int_{-M}^M |\sigma(x+1) - \sigma(x)| dx = \int_{0}^1 \displaystyle\sum_{i=-M}^{M-1} |\sigma(x+i+1) - \sigma(x+i)|dx \leq \int_{0}^1 \|\sigma\|_{BV} dx = \|\sigma\|_{BV},
 \end{equation}
for any integer $M$. Taking the limit as $M\rightarrow \infty$, we get $\|\tau\|_{L^1}\leq \|\sigma\|_{BV}$ and thus $\tau \in L^1(\mathbb{R})$ as desired.
\end{proof}

 \section{An Improved Estimate Using Stratified Sampling}
 In this section, we show how the argument in the proof of Theorem \ref{approximation_rate_theorem} can be improved using a stratified sampling method. Our argument is based on the method presented in \cite{klusowski2016uniform,makovoz1996random}. This method allows us to obtain an improved asymtotic convergence rate under additional smoothness assumptions on both the activation function and the function to be approximated.
 \begin{theorem}\label{stratified-sampling}
 Let $\Omega\subset \mathbb{R}^d$ be a bounded domain and $\epsilon > 0$. If the activation function $\sigma\in W^{m,\infty}(\mathbb{R})$ is non-zero and it satisfies the polynomial decay condition
 \begin{equation}\label{growth_condition_two}
  |\sigma^{(k)}(s)| \leq C_p(1 + |s|)^{-p}
 \end{equation}
 for $0\leq k\leq m+1$ and some $p > 1$, we have
 \begin{equation}
  \inf_{f_n\in \Sigma_d^n(\sigma)}\|f - f_n\|_{H^m(\Omega)} \leq |\Omega|^{\frac{1}{2}}C(p,m,\text{\normalfont diam}(\Omega),\sigma)n^{-\frac{1}{2}-\frac{t}{(2+t)(d+1)}}\|f\|_{\mathcal{B}^{m+1+\epsilon}}
 \end{equation}
 where $t = \min(p-1,\epsilon)$, for any $f\in \mathcal{B}^{m+1+\epsilon}$.
 \end{theorem}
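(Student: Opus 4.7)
The plan is to refine the proof of Theorem \ref{approximation_rate_theorem} by replacing the Barron--Maurey Monte Carlo estimate with a covering-based random approximation bound in the style of Makovoz \cite{makovoz1996random}, as applied to neural networks by Klusowski and Barron \cite{klusowski2016uniform}. I would start from the integral representation
$$f(x)=\mathbb{E}_{d\lambda}\!\left[J(\omega,b)\chi(\omega,b)\sigma(a^{-1}\omega\cdot x+b)\right]$$
constructed in the proof of Theorem \ref{approximation_rate_theorem}, which identifies $f$ with an element of the closed convex hull of the uniformly $H^m(\Omega)$-bounded parametric family $G=\{f_{\omega,b}\}$. The improved rate will come from two new ingredients in the stronger hypotheses: the extra integrability $f\in\mathcal{B}^{m+1+\epsilon}$, which allows truncation of the parameter domain, and the extra case $k=m+1$ of the polynomial decay \eqref{growth_condition_two}, which quantifies the smoothness of $(\omega,b)\mapsto f_{\omega,b}$ as a map into $H^m(\Omega)$.

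First I would truncate the parameter domain to $\Theta_M=\{(\omega,b):|\omega|\le M,\ |b|\le R|\omega|/|a|+M\}$ for a parameter $M$ to be chosen. Repeating the tail computation \eqref{eq_775}--\eqref{bound_on_I} using the extra polynomial weight $(1+|\omega|)^\epsilon$ available in $\|f\|_{\mathcal{B}^{m+1+\epsilon}}$ for the $\omega$-tail, and the polynomial decay of $\sigma$ for the $b$-tail, shows that the contribution of the truncated region to $\|f\|_{H^m(\Omega)}$ is at most $CM^{-t}\|f\|_{\mathcal{B}^{m+1+\epsilon}}$, with $t=\min(p-1,\epsilon)$.

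Second, I would use the extra decay of $\sigma^{(m+1)}$ to show that on $\Theta_M$ the map $(\omega,b)\mapsto f_{\omega,b}$ is Lipschitz as a map into $H^m(\Omega)$. Differentiating $D^\alpha_xf_{\omega,b}$ with $|\alpha|\le m$ in $(\omega,b)$ produces (i) a factor $\sigma^{(m+1)}(a^{-1}\omega\cdot x+b)$ controlled by $h(b,\omega)$; (ii) at most one extra factor of $(1+|\omega|)$ or $|x|\le\text{diam}(\Omega)$ from the chain rule; and (iii) derivatives of the weight $J(\omega,b)=C(1+|\omega|)^{-m}h(b,\omega)^{-1}$, which by direct computation introduce only $p$-dependent constants. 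These combine to give an $H^m(\Omega)$-Lipschitz estimate, and hence a metric-entropy bound $\epsilon_N(G_M)\lesssim M^\beta N^{-1/(d+1)}$ for the restricted family $G_M=\{f_{\omega,b}:(\omega,b)\in\Theta_M\}$, where $\beta$ is an explicit power coming from the Lipschitz constant times the parameter-space diameter. Third, applying Makovoz's random approximation theorem to $G_M$ yields an $n$-term convex combination $f_n\in\Sigma_d^n(\sigma)$ approximating the truncated integral with $H^m(\Omega)$-error of order $\epsilon_n(G_M)\cdot n^{-1/2}\cdot\|f\|_{\mathcal{B}^{m+1}}\lesssim M^\beta n^{-1/2-1/(d+1)}\|f\|_{\mathcal{B}^{m+1}}$. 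Combining this with the truncation bound,
$$\|f-f_n\|_{H^m(\Omega)}\lesssim |\Omega|^{1/2}\|f\|_{\mathcal{B}^{m+1+\epsilon}}\Bigl(M^{-t}+M^\beta n^{-1/2-1/(d+1)}\Bigr),$$
and optimizing $M$ as a power of $n$ gives the claimed exponent $-\tfrac12-\tfrac{t}{(2+t)(d+1)}$.

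The main obstacle is the precise accounting in Step 2: the final exponent in the theorem is sensitive to the power $\beta$ in the Lipschitz/entropy bound, since it governs the trade-off between the $M^{-t}$ truncation error and the $M^\beta n^{-1/2-1/(d+1)}$ sampling error. Pinning down $\beta$ requires tracking each factor of $|\omega|$, $|b|$, and $x$ produced by differentiating $D^\alpha_x f_{\omega,b}$ in its parameters, verifying that the weight $J$ (and its derivatives) absorb the obvious $(1+|\omega|)$-growth, and choosing the partition of $\Theta_M$ so that the covering radius $\epsilon_n(G_M)$ has exactly the $M$-dependence consistent with the stated exponent. Once this book-keeping is carried out, the truncation step and the application of Makovoz are essentially routine.
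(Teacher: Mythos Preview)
Your overall strategy---truncate the parameter domain, exploit the $(m{+}1)$-st derivative decay to get a Lipschitz/entropy bound on the truncated dictionary, then apply a Makovoz-type random approximation---is exactly the idea behind the paper's proof, which implements the same plan via direct stratified sampling. But the way you handle the tail will not produce the stated exponent, and this is not a bookkeeping issue in Step~2.

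The problem is that you \emph{discard} the tail $\Theta_M^c$ and carry its contribution as an additive error $M^{-t}$ that is not multiplied by $n^{-1/2}$. Carrying out Step~2 gives $\beta=1$: the Lipschitz constant of $(\omega,b)\mapsto f_{\omega,b}$ into $H^m(\Omega)$ is uniformly $O(\|f\|_{\mathcal{B}^{m+1}})$, independent of the truncation level (this is precisely what the extra derivative buys), while $\Theta_M\subset\mathbb{R}^{d+1}$ has diameter $O(M)$, so $\epsilon_N(G_M)\lesssim M\,N^{-1/(d+1)}$. Your bound then reads
\[
M^{-t}+M\,n^{-\frac12-\frac{1}{d+1}},
\]
and optimizing over $M$ yields the exponent $-\dfrac{t(d+3)}{2(d+1)(t+1)}$, which is strictly smaller than the claimed $-\dfrac12-\dfrac{t}{(2+t)(d+1)}$ for every $d\ge1$ and finite $t>0$ (e.g.\ $t=d=1$ gives $-\tfrac12$ versus the target $-\tfrac23$). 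Since the Lipschitz constant does not depend on $t$, no admissible $\beta$ can repair this.

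The remedy, and this is what the paper does, is to \emph{sample} the tail rather than truncate it: treat $S_A^c$ as one additional stratum and draw $\lceil n\tilde\lambda(S_A^c)\rceil$ samples from it. Then the tail contributes to the variance as $n^{-1}\tilde\lambda(S_A^c)\sup\|f_{\omega,b}\|^2\lesssim n^{-1}A^{-t}$, i.e.\ an error of order $n^{-1/2}A^{-t/2}$ rather than $A^{-t}$. Balancing $n^{-1/2}A^{-t/2}$ against the stratified main term $A\,n^{-1/2-1/(d+1)}$ gives $A^{2+t}=n^{2/(d+1)}$, and the stated exponent drops out.

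There is a second, smaller gap in Step~2: you treat $(\omega,b)\mapsto f_{\omega,b}=J\chi\sigma$ as Lipschitz, but $\chi(\omega,b)$ carries the phase of $\hat f(\omega)$ and need not be smooth in $\omega$. The paper removes $\chi$ by introducing an auxiliary random sign $\eta\in\{\pm1\}$ with conditional mean $\chi(\omega,b)$ and enlarging the measure to $\mathbb{R}^d\times\mathbb{R}\times\{\pm1\}$; on each sheet $\eta=\pm1$ the family $\eta J(\omega,b)\sigma(a^{-1}\omega\cdot x+b)$ is then genuinely Lipschitz in $(\omega,b)$.
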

 
 \begin{proof}
  We follow the proof of Theorem \ref{approximation_rate_theorem} up to the integral representation of $D^\alpha f(x)$ given in equation \eqref{eq_837},
  \begin{equation}
    f(x) = \mathbb{E}_{d\lambda}\left(J(\omega,b)\chi(\omega,b)\sigma\left(\frac{\omega}{a}\cdot x+b\right)\right),   
  \end{equation}
  where $\chi(\omega,b)\in [-1,1]$, $d\lambda$ is given by
  \begin{equation}
   d\lambda = \frac{1}{I(p,\Omega,\sigma,f)}(1 + |\omega|)^m h(b,\omega) |\hat{f}(\omega)| dbd\omega,
  \end{equation}
  and $J(\omega,b)$ is given by equation \eqref{definition-of-J}.
  
 We proceed by removing $\chi(\omega,b)$ from this representation. We do this by introducing a random sign $\eta\in\{\pm1\}$, whose distribution depends on $\omega$ and $b$, such that the conditional distribution satisfies $\mathbb{E}(\eta|\omega,b) = \chi(\omega,b)$. To be more precise, this leads to a probability measure $d\tilde\lambda$ on $\mathbb{R}\times\mathbb{R}^d\times \{\pm1\}$ defined by
 \begin{equation}
  \tilde\lambda(B) = \int_{B_{+1}} \frac{\chi(\omega,b) + 1}{2} d\lambda + \int_{B_{-1}}\frac{1 - \chi(\omega,b)}{2}d\lambda,
 \end{equation}
 where $B_{+1},B_{-1}\subset \mathbb{R}\times\mathbb{R}^d$ are the intersections $B\cap (\mathbb{R}\times\mathbb{R}^d\times \{+1\})$ and $B\cap (\mathbb{R}\times\mathbb{R}^d\times \{-1\})$, respectively. We note that by construction this new distribution $d\tilde\lambda$ satisfies
 \begin{equation}
  f(x) = \mathbb{E}_{d\tilde\lambda}\left(\eta J(\omega,b)\sigma\left(\frac{\omega}{a}\cdot x+b\right)\right).   
  \end{equation}
 
 We now introduce a quantity $A \geq 1$ to be chosen later, and consider the set $S_A = \{(\omega,b,\eta):|b| \leq A, |\omega| \leq \frac{A|a|}{2R}\}$. Following the argument in \cite{klusowski2016uniform}, we partition the set $S_A$ into sets $S_1,...,S_{2n}$ where each $S_i$ satisfies $|b - b^\prime| \leq An^{-1/(d+1)}$, $|\omega - \omega^\prime| \leq \frac{A|a|}{2R}n^{-1/(d+1)}$ and $\eta = \eta^\prime$ whenever $(\omega,b,\eta),(\omega^\prime,b^\prime,\eta^\prime)\in S_i$.
 
 Add the additional set $S_{2n+1} = S_A^c$ and let $d\tilde\lambda_i = \tilde\lambda(S_i)^{-1}d\tilde\lambda|_{S_i}$ (the conditional probability distribution on the set $A_i$). We write
 \begin{equation}\label{eq_960}
    f(x) = \left[\displaystyle\sum_{i=1}^{2n+1} \tilde\lambda(S_i)\mathbb{E}_{d\tilde\lambda_i}\left(\eta J(\omega,b)\sigma\left(\frac{\omega}{a}\cdot x+b\right)\right)\right].
 \end{equation}
Now we draw samples from each of the sets $S_i$ according to the distributions $\tilde\lambda_i$ to estimate the inner expectations above, which gives us an estimate of $f(x)$.

In particular, we let $c_i = \lceil\tilde\lambda(S_i)n\rceil$ be the number of samples chosen from each set $S_i$. Notice that $\sum_{i=1}^n c_i \leq 3n + 1$ (since the $\tilde\lambda(S_i)$ sum to $1$) and thus the total number of samples is $O(n)$.

So let $(\omega_{ij}, b_{ij}, \eta_{ij})$ with $i = 1,...,2n+1$ and $j = 1,...,c_i$ be random samples from $S_i$ drawn independently from $\tilde\lambda_i$. We form our approximation $\tilde{f}_n\in \Sigma_d^{3n+1}(\sigma)$ as
\begin{equation}\label{eq_968}
 \tilde{f}_n(x) = \displaystyle\sum_{i=1}^{2n+1}\tilde\lambda(S_i) \frac{1}{c_i}\displaystyle\sum_{j=1}^{c_i}\left(\eta_{ij}J(\omega_{ij},b_{ij})D_x^\alpha\sigma\left(\frac{\omega_{ij}}{a}\cdot x+b_{ij}\right)\right).
\end{equation}
Equation \eqref{eq_960} implies that
\begin{equation}
 \mathbb{E}(\tilde{f}_n) = f(x),
\end{equation}
where the expectation is taken over the samples above.

We now proceed by bounding the variance
\begin{equation}
 \mathbb{E}(\|\tilde{f}_n - \mathbb{E}(\tilde{f}_n)\|^2_{H^m(\Omega)}) = \mathbb{E}(\|\tilde{f}_n - f\|^2_{H^m(\Omega)}).
\end{equation}
Equation \eqref{eq_968} along with the independence of the samples $(\omega_{ij},b_{ij},\eta_{ij})$ implies that this variance is given by
\begin{equation}\label{eq_982}
 \mathbb{E}(\|\tilde{f}_n - f\|^2_{H^m(\Omega)}) = \displaystyle\sum_{i=1}^{2n+1} \tilde\lambda(S_i)^2\frac{1}{c_i}V_i,
\end{equation}
where $V_i$ is the $H^m(\Omega)$-variance of the random function
\begin{equation}
 f_{\omega, b, \eta}(x) = \eta J(\omega,b)\sigma\left(\frac{\omega}{a}\cdot x+b\right)
\end{equation}
with $(\omega, b, \eta)$ chosen according to $\tilde\lambda_i$, i.e.
\begin{equation}
 V_i = \mathbb{E}_{d\tilde{\lambda}_i}(\|f_{\omega, b, \eta} - \mathbb{E}_{d\tilde{\lambda}_i}f_{\omega, b, \eta}\|^2_{H^m(\Omega)}).
\end{equation}
We proceed to bound the $V_i$. Consider first $V_{2n+1}$. It is clear that
\begin{equation}
V_{2n+1} \leq \sup_{(\omega, b, \eta)\in S_{2n+1}}\|f_{\omega, b, \eta}\|^2_{H^m(\Omega)} \leq \sup_{\omega, b}\|f_{\omega, b}\|^2_{H^m(\Omega)}.
\end{equation}
where $f_{\omega, b}$ is as in the proof of theorem \ref{approximation_rate_theorem} (just with $\chi(\omega,b)$ removed). The argument culminating in equation \eqref{eq_890} then gives us
\begin{equation}\label{v_bound_1}
 V_{2n+1} \lesssim |\Omega|\|f\|^2_{\mathcal{B}^{m+1}} \leq |\Omega|\|f\|^2_{\mathcal{B}^{m+1+\epsilon}}.
\end{equation}
Here and in the following the implied constant in $\lesssim$ will only depend on $\sigma,m,p$, and $\text{diam}(\Omega)$ (and not $f$ or $n$).

Now consider $V_i$ for $i\leq 2n$. It is clear that
\begin{equation}
 V_i \leq \sup_{(\omega,b,\eta),(\omega^\prime,b^\prime,\eta^\prime)\in S_i} \|f_{\omega,b,\eta} - f_{\omega^\prime,b^\prime,\eta^\prime}\|_{H^m(\Omega)}^2.
\end{equation}
As in the proof of theorem \ref{approximation_rate_theorem}, the Cauchy-Schwartz inequality gives
\begin{equation}
 \|f_{\omega,b,\eta} - f_{\omega^\prime,b^\prime,\eta^\prime}\|_{H^m(\Omega)} \leq |\Omega|^{\frac{1}{2}} \|f_{\omega,b,\eta} - f_{\omega^\prime,b^\prime,\eta^\prime}\|_{W^{m,\infty}(\Omega)},
\end{equation}
and thus it suffices to bound $|D_x^\alpha f_{\omega,b,\eta}(x) - D_x^\alpha f_{\omega^\prime,b^\prime,\eta^\prime}(x)|$ uniformly in $x$ and $ (\omega,b,\eta),(\omega^\prime,b^\prime,\eta^\prime)\in S_i$, for $|\alpha|\leq m$.

So suppose $(\omega,b,\eta)$ and $(\omega^\prime, b^\prime,\eta^\prime)$ are both in $S_i$. We proceed to bound the quantity
\begin{equation}\label{eq_1006}
 |D_x^\alpha f_{\omega,b,\eta}(x) - D_x^\alpha f_{\omega^\prime,b^\prime,\eta^\prime}(x)| = \left|\eta J(\omega,b)D_x^\alpha\sigma\left(\frac{\omega}{a}\cdot x+b\right) - \eta^\prime J(\omega^\prime, b^\prime)D_x^\alpha\sigma\left(\frac{\omega^\prime}{a}\cdot x+b^\prime\right)\right|.
\end{equation}
By the definition of $S_i$, the signs $\eta = \eta^\prime$ are equal. Taking out these signs and taking the derivatives of the $\sigma$ terms gives
\begin{equation}\label{eq_1002}
 |D_x^\alpha f_{\omega,b,\eta}(x) - D_x^\alpha f_{\omega^\prime,b^\prime,\eta^\prime}(x)| \leq |a|^{-|\alpha|}\left|q(\omega,b,x) - q(\omega^\prime,b^\prime,x)\right|
\end{equation}
where (expanding the definition of $J(\omega, b)$ \eqref{definition-of-J})
\begin{equation}
\begin{split}
q(\omega,b,x) &= \omega^{\alpha}J(\omega,b)\sigma^{(|\alpha|)}\left(\frac{\omega}{a}\cdot x+b\right) \\ 
&= (2\pi |\hat{\sigma}(a)|)^{-1}I(p,\Omega,\sigma,f)\omega^\alpha(1 + |\omega|)^{-m} h(b,\omega)^{-1}\sigma^{(|\alpha|)}\left(\frac{\omega}{a}\cdot x+b\right).
\end{split}
\end{equation}
We now differentiate $q$ with respect to $\omega$ and $b$, noting the following bounds.
\begin{equation}
 |(1 + |\omega|)^{-m}\omega^\alpha|,|D_\omega [(1 + |\omega|)^{-m}\omega^\alpha]| \lesssim 1,
\end{equation}
since $|\alpha| \leq m$, and
\begin{equation}
 |h(\omega, b)^{-1}| \leq \left(1 + \max\left(0,|b| - \frac{R|\omega|}{|a|}\right)\right)^{p},
\end{equation}
\begin{equation}
 |D_\omega [h(\omega, b)^{-1}]|,|D_b [h(\omega, b)^{-1}]| \lesssim \left(1 + \max\left(0,|b| - \frac{R|\omega|}{|a|}\right)\right)^{p-1} \leq \left(1 + \max\left(0,|b| - \frac{R|\omega|}{|a|}\right)\right)^{p}
\end{equation}
by the definition of $h$ \eqref{h_definition}. Combining these bounds with the assumption \eqref{growth_condition_two} on the decay of the derivatives of $\sigma$ (note that this is where we need the decay condition on the $m+1$ derivative), we get (using the bound on the normalization constant $I$, \eqref{bound_on_I})
\begin{equation}
 |D_b q(\omega,b,x)| \lesssim C_p\left(1+\left|\frac{\omega}{a}\cdot x+b\right|\right)^{-p}\left(1 + \max\left(0,|b| - \frac{R|\omega|}{|a|}\right)\right)^{p}\|f\|_{\mathcal{B}^{m+1}}.
\end{equation}
As in the proof of theorem \ref{approximation_rate_theorem} we now use the relation in equation \eqref{eq_828} to see that
\begin{equation}
 \left(1+\left|\frac{\omega}{a}\cdot x+b\right|\right)^{-p}\left(1 + \max\left(0,|b| - \frac{R|\omega|}{|a|}\right)\right)^{p} \leq 1,
\end{equation}
and so
\begin{equation}
 |D_b q(\omega,b,x)| \lesssim|f\|_{\mathcal{B}^{m+1}} \leq \|f\|_{\mathcal{B}^{m+1+\epsilon}}.
\end{equation}
Similarly, we obtain, since $|x|\leq R$,
\begin{equation}
 |D_\omega q(\omega,b,x)| \lesssim \frac{R}{|a|}\|f\|_{\mathcal{B}^{m+1}} \leq \frac{R}{|a|}\|f\|_{\mathcal{B}^{m+1+\epsilon}}.
\end{equation}
This means that for fixed $x$, the function $q$ is Lipschitz with respect to both $\omega$ and $b$ with constant proportional to $\|f\|_{\mathcal{B}^{m+1+\epsilon}}$.
Now the diameter bounds on $S_i$ imply that the $\omega$ and $\omega^\prime$, and the $b$ and $b^\prime$ in equation \eqref{eq_1002} differ by at most $\frac{A|a|}{2R}n^{-1/(d+1)}$ and $An^{-1/(d+1)}$, respectively. Combining this with the above Lipschitz bounds, we see that the quantity in equation \eqref{eq_1006} is bounded by
\begin{equation}
 \eqref{eq_1006} \lesssim An^{-1/(d+1)}\|f\|_{\mathcal{B}^{m+1+\epsilon}} 
\end{equation}
uniformly in $x$ and $(\omega,b,\eta), (\omega^\prime,b^\prime,\eta^\prime)\in S_i$,
and thus the variance $V_i$ is bounded by
\begin{equation}\label{v_bound_2}
V_i \lesssim |\Omega|A^2n^{-2/(d+1)}\|f\|_{\mathcal{B}^{m+1+\epsilon}}^2.
\end{equation}
Plugging equations \eqref{v_bound_2} and \eqref{v_bound_1} into equation \eqref{eq_982}, and noting that $c_i \geq n\tilde\lambda(S_i)$, we get
\begin{equation}\label{eq_1021}
 \mathbb{E}(\|\tilde{f}_n - f\|^2_{H^m(\Omega)}) \lesssim |\Omega|\frac{\|f\|_{\mathcal{B}^{m+1+\epsilon}}^2}{n}\left[\tilde\lambda(S_{2n+1}) + \displaystyle\sum_{i=1}^{2n} \tilde\lambda(S_i)A^2n^{-2/(d+1)}\right].
\end{equation}

The final ingradient we need is a bound on the probability measure of $S_{2n+1} = S_A^c$. To do this, we break the set $S_A^c$ into two pieces, $A_1$, where $|\omega|>\frac{A|a|}{2R}$,
and $A_2$, where $|\omega|\leq \frac{A|a|}{2R}$ and $|b| > A$. We get
\begin{equation}\label{eq_1054}
 \tilde\lambda(S_{2n+1}) \leq \frac{1}{I(p,\Omega,\sigma,f)}\left[\int_{A_1}(1 + |\omega|)^m h(b,\omega) |\hat{f}(\omega)| dbd\omega + \int_{A_2}(1 + |\omega|)^m h(b,\omega) |\hat{f}(\omega)| dbd\omega\right].
\end{equation}
By integrating out $b$, we immediately obtain
\begin{equation}
 \int_{A_1}(1 + |\omega|)^m h(b,\omega) |\hat{f}(\omega)| dbd\omega \leq \int_{|\omega| > \frac{A|a|}{2R}} C_p(1 + |\omega|)^{m+1}|\hat{f}(\omega)| d\omega \lesssim \|f\|_{\mathcal{B}^{m+1+\epsilon}}A^{-\epsilon}.
\end{equation}
On the other hand, on $A_2$ we have $|\omega|\leq \frac{A|a|}{2R}$ and $|b| > A$. This implies that
\begin{equation}
 |b| - \frac{R|\omega|}{|a|} \geq \frac{A}{2},
\end{equation}
so that for $|\omega|\leq \frac{A|a|}{2R}$, we have
\begin{equation}
 \int_{|b| > A} h(\omega,b) = \int_{|b| > A}\left(1 + \max\left(0,|b| - \frac{R|\omega|}{|a|}\right)\right)^{-p} \leq \int_{|x| > \frac{A}{2}}\left(1 + x\right)^{-p} \lesssim\left(1 + \frac{A}{2}\right)^{1-p}.
\end{equation}
Integrating in $b$ and then $\omega$, we obtain (recall $A\geq 1$)
\begin{equation}
 \int_{A_2}(1 + |\omega|)^m h(b,\omega) |\hat{f}(\omega)| dbd\omega \lesssim \left(1 + \frac{A}{2}\right)^{1-p} \int_{|\omega| \leq \frac{A|a|}{2R}} (1 + |\omega|)^m|\hat{f}(\omega)| d\omega \lesssim \|f\|_{\mathcal{B}^{m+1+\epsilon}}A^{1-p}.
\end{equation}
Plugging this into equation \eqref{eq_1054} we get (note that equations \eqref{eq_775} and \eqref{bound_on_I} imply that $\|f\|_{\mathcal{B}^{m+1+\epsilon}} \lesssim I(p,\omega,\sigma,f)$)
\begin{equation}
 \tilde\lambda(S_{2n+1}) \lesssim A^{-\min(p-1,\epsilon)}.
\end{equation}
Combining all of this with equation \eqref{eq_1021}, we get
\begin{equation}
 \mathbb{E}(\|\tilde{f}_n - f\|^2_{H^m(\Omega)}) \lesssim |\Omega|\frac{\|f\|_{\mathcal{B}^{m+1+\epsilon}}^2}{n}\left[A^{-\min(p-1,\epsilon)} + A^2n^{-2/(d+1)}\right].
\end{equation}
Optimizing over $A$, we obtain, for $A = n^{2/[(d+1)(2 + \min(p-1,\epsilon))]}$,
\begin{equation}
 \mathbb{E}(\|\tilde{f}_n - f\|^2_{H^m(\Omega)}) \lesssim |\Omega|\|f\|_{\mathcal{B}^{m+1+\epsilon}}^2n^{-1-\frac{2\min(p-1,\epsilon)}{(d+1)(2+\min(p-1,\epsilon))}}.
\end{equation}
This bound on the expectation means that there must exist samples $(\omega_{ij},b_{ij}, \eta_{ij})$ such that $\tilde{f}_n$ defined by \eqref{eq_968} satisfies
\begin{equation}
 \|\tilde{f}_n - f\|^2_{H^m(\Omega)}\lesssim |\Omega|\|f\|_{\mathcal{B}^{m+1+\epsilon}}^2n^{-1-\frac{2\min(p-1,\epsilon)}{(d+1)(2+\min(p-1,\epsilon))}}.
\end{equation}
Thus we finally get
\begin{equation}
   \inf_{f_n\in \Sigma_d^{3n+1}(\sigma)}\|f_n - f\|_{H^m(\Omega)} \leq |\Omega|^{\frac{1}{2}}C(p,m,\text{\normalfont diam}(\Omega),\sigma)\|f\|_{\mathcal{B}^{m+1+\epsilon}}n^{-\frac{1}{2}-\frac{\min(p-1,\epsilon)}{(d+1)(2+\min(p-1,\epsilon))}}
\end{equation}
as desired.

\end{proof}

We can also improve the rate in Theorem \ref{periodic-activation} using a stratified sampling approach. The argument is a straightforward modification of the proof of Theorem \ref{stratified-sampling} beginning with the representation \eqref{periodic_representation}, and we state the result without proof.

\begin{theorem}\label{periodic-improved}
  Let $\Omega\subset \mathbb{R}^d$ be a bounded domain with Lipschitz boundary and $\epsilon > 0$. If the activation function $\sigma\in W^{m,\infty}(\mathbb{R})$ is a non-constant periodic function, we have
 \begin{equation}
  \inf_{f_n\in \Sigma_d^n(\sigma)}\|f - f_n\|_{H^m(\Omega)} \leq |\Omega|^{\frac{1}{2}}C(\sigma)n^{-\frac{1}{2}-\frac{\epsilon}{(d+1)(2+\epsilon)}}\|f\|_{\mathcal{B},m+\epsilon},
 \end{equation}
 for any $f\in \mathcal{B}^{m+\epsilon}$.
\end{theorem}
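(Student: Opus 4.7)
The plan is to adapt the stratified sampling argument from the proof of Theorem \ref{stratified-sampling} to the periodic integral representation \eqref{periodic_representation}. Recall that the representation gives
\begin{equation}
f(x) = \mathbb{E}_{d\lambda}\left(\|f\|_{\mathcal{B}^m}|a_i|^{-1}(1+|\omega|)^{-m}\chi(\omega,b)\sigma(\omega\cdot x+b)\right),
\end{equation}
where $d\lambda$ is a probability measure on $\mathbb{R}^d\times[0,2\pi]$ with density proportional to $(1+|\omega|)^m|\hat f(\omega)|$. The first step is to replace $\chi(\omega,b)\in[-1,1]$ by an auxiliary random sign $\eta\in\{\pm1\}$ whose conditional expectation given $(\omega,b)$ equals $\chi(\omega,b)$, exactly as in the proof of Theorem \ref{stratified-sampling}. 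This produces a probability measure $d\tilde\lambda$ on $\mathbb{R}^d\times[0,2\pi]\times\{\pm1\}$ together with an unbiased representation $f(x)=\mathbb{E}_{d\tilde\lambda}(\eta J(\omega,b)\sigma(\omega\cdot x+b))$, where $J(\omega,b)$ bundles the normalization constants.

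Next, introduce a cutoff parameter $A\geq 1$ and restrict attention to $S_A=\{|\omega|\leq A\}\times[0,2\pi]\times\{\pm1\}$. Since $b$ is already confined to $[0,2\pi]$ and $\sigma$ is bounded with bounded derivatives up to order $m$, the only unbounded direction is $\omega$. Partition $S_A$ into $2n$ pieces $S_1,\dots,S_{2n}$ each of $\omega$-diameter at most $An^{-1/(d+1)}$, of $b$-diameter at most $n^{-1/(d+1)}$, and with $\eta$ constant on each piece, and set $S_{2n+1}=S_A^c$. Drawing $c_i=\lceil n\tilde\lambda(S_i)\rceil$ samples from each $S_i$ gives an empirical estimator $\tilde f_n\in\Sigma_d^{3n+1}(\sigma)$ whose $H^m(\Omega)$-variance decomposes as $\sum_i\tilde\lambda(S_i)^2V_i/c_i$, with $V_i$ the $H^m$-variance of the integrand restricted to $S_i$.

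The bounds on the $V_i$ proceed as follows. On the interior strata ($i\leq 2n$), the functions $\omega\mapsto \omega^\alpha(1+|\omega|)^{-m}$ for $|\alpha|\leq m$ and $(\omega,b)\mapsto\sigma^{(|\alpha|)}(\omega\cdot x+b)$ are both Lipschitz in $(\omega,b)$ uniformly in $x\in\Omega$, with Lipschitz constants bounded in terms of $\sigma$, $m$, and $\mathrm{diam}(\Omega)$ (here we use $\sigma\in W^{m,\infty}$ so that one derivative more than $|\alpha|$ is available, except that periodicity already bounds $\sigma^{(k)}$ globally). Combined with the diameter estimate on $S_i$ this yields $V_i\lesssim |\Omega|A^2n^{-2/(d+1)}\|f\|_{\mathcal{B}^m}^2$. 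On the tail $S_{2n+1}$ one uses only the uniform bound $V_{2n+1}\lesssim|\Omega|\|f\|_{\mathcal{B}^m}^2$ multiplied by the tail mass
\begin{equation}
\tilde\lambda(S_{2n+1})\lesssim \|f\|_{\mathcal{B}^{m+\epsilon}}^{-1}\int_{|\omega|>A}(1+|\omega|)^m|\hat f(\omega)|d\omega\lesssim A^{-\epsilon}.
\end{equation}

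Assembling the variance bound gives
\begin{equation}
\mathbb{E}\|f-\tilde f_n\|_{H^m(\Omega)}^2\lesssim \frac{|\Omega|\|f\|_{\mathcal{B}^{m+\epsilon}}^2}{n}\left[A^{-\epsilon}+A^2n^{-2/(d+1)}\right],
\end{equation}
and optimizing in $A$ (choose $A=n^{2/[(d+1)(2+\epsilon)]}$) produces the claimed exponent. Finally, passing from expected to realized error by the usual existence argument and absorbing the factor $3n+1$ into $n$ finishes the proof. The main obstacle, as in Theorem \ref{stratified-sampling}, is the careful Lipschitz estimate on each stratum; here it is actually simpler than in the polynomially decaying case because $b$ lies in a bounded interval and $\sigma$ together with its derivatives are globally bounded by periodicity, so the awkward $h(b,\omega)^{-1}$ factor in the weight is replaced by a constant.
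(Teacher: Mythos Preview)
Your approach matches what the paper itself prescribes: it states Theorem~\ref{periodic-improved} without proof, saying only that ``the argument is a straightforward modification of the proof of Theorem~\ref{stratified-sampling} beginning with the representation~\eqref{periodic_representation},'' and your sketch carries out exactly that modification. The simplifications you identify---$b$ already confined to $[0,2\pi]$, no $h(b,\omega)^{-1}$ weight, only the $\omega$-tail to control via $\tilde\lambda(S_A^c)\lesssim A^{-\epsilon}$---are all correct and make the periodic case cleaner than the polynomially decaying one.

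There is, however, one concrete step that does not go through under the stated hypothesis. The Lipschitz estimate on the interior strata requires differentiating $\sigma^{(|\alpha|)}(\omega\cdot x+b)$ in $\omega$ or $b$, which brings in $\sigma^{(|\alpha|+1)}$. For $|\alpha|=m$ this needs $\sigma^{(m+1)}\in L^\infty$, which the assumption $\sigma\in W^{m,\infty}$ does not supply; your parenthetical that ``periodicity already bounds $\sigma^{(k)}$ globally'' only yields boundedness of the derivatives that already exist, not an extra one. This exactly parallels the requirement $0\le k\le m+1$ in condition~\eqref{growth_condition_two} of Theorem~\ref{stratified-sampling}, and the hypothesis here should almost certainly read $\sigma\in W^{m+1,\infty}$. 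That is an oversight in the paper's statement rather than a flaw in your strategy, but your proof sketch should flag it instead of papering over it.
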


\section{Conclusion}
We have provided a few new results in the theory of approximation by neural networks. These results improve upon existing results
on the rate of approximation achieved by two-layer neural network as the number of neurons increases
by entending the results to more general activation functions. In particular, we obtain a dimension independent rate of $n^{-\frac{1}{2}}$ for polynomialls decaying activation functions, and a rate of $n^{-\frac{1}{4}}$ for more general bounded activation functions. We also show how a stratified sampling argument can be used to further improve the convergence rate under mild additional assumptions.

There are a few interesting question which remain. On the one hand, we conjecture that Theorem \ref{theorem_no_decay} can be further improved to provide an approximation rate of $n^{-\frac{1}{2}}$. In addition, we would like to determine whether the rate in Theorem \ref{stratified-sampling} can be further improved. In particular, a better rate is obtained for the ReLU activation function in \cite{klusowski2016uniform}. Finally, we propose that a more detailed investigation of the Barrom norm in high dimensions is an important piece in understanding how shallow neural networks defeat the curse of dimensionality.

\section{Acknowledgements}
We would like to thank Prof. Jason Klusowski and Dr. Juncai He for insightful discussions and helpful comments. This work was supported by a Penn State Institute for Cyber Science Seed Grant, the Verne M. Willaman Fund, and the National Science Foundation (Grant No. DMS-1819157).
\bibliographystyle{spmpsci}
\bibliography{references}
\end{document}